\newcommand{\Aa}{\mathscr{A}}
\newcommand{\AlgLie}{\mathfrak{g}}
\newcommand{\C}{\mathbbm{C}}
\newcommand{\Hh}{\mathscr{H}}
\newcommand{\Ll}{\mathcal{L}}
\newcommand{\mc}{\mathcal}
\newcommand{\mT}{\ensuremath{\mathbb{T}}}
\newcommand{\N}{\mathbbm{N}}
\newcommand{\Oo}{\mathcal{O}}
\newcommand{\R}{\mathbbm{R}}
\newcommand{\Z}{\mathbbm{Z}}
\DeclareMathOperator{\dom}{Dom}
\DeclareMathOperator{\id}{id}
\DeclareMathOperator{\GNS}{GNS}
\DeclareMathOperator{\Tr}{Tr}
\DeclareMathOperator{\Ran}{Ran}
\theoremstyle{definition}		
\newtheorem{Def}{Definition}[section]
\newtheorem{Not}[Def]{Notation}
\newtheorem{Ex}[Def]{Example}
\theoremstyle{plain}
\newtheorem{Prop}[Def]{Proposition}
\newtheorem{Th}[Def]{Theorem}
\newtheorem{Lem}[Def]{Lemma}
\theoremstyle{remark}
\newtheorem{Rem}[Def]{Remark}
\theoremstyle{definition}		
\newtheorem{Assumption}[Def]{Assumption}
\title{Ergodic Actions and Spectral Triples}
\author{Olivier \textsc{Gabriel} \& Martin \textsc{Grensing}}
\begin{document}

\maketitle

\abstract{
In this article, we give a general construction of spectral triples from certain Lie group actions on unital $C^*$-algebras. If the group $G$ is compact and the action is ergodic, we actually obtain a real and finitely summable spectral triple which satisfies the first order condition of Connes' axioms. This provides a link between the ``algebraic'' existence of ergodic action and the ``analytic'' finite summability property of the unbounded selfadjoint operator. More generally, for compact $G$ we carefully establish that our (symmetric) unbounded operator is essentially selfadjoint. Our results are illustrated by a host of examples -- including noncommutative tori and quantum Heisenberg manifolds.
}

\medbreak


\textsl{Mathematics Subject Classification (2010).} 46L87, 58B34. 

\textsl{Keywords:} Spectral triple, Lie group, ergodic action, Dirac operator, $K$-ho\-mo\-lo\-gy, unbounded Fredholm module.

%

\section{Introduction}

The Gelfand-Naimark theorem establishes an equivalence of categories between locally compact topological spaces and commutative $C^*$-algebras, which one takes advantage of to define noncommutative spaces. 

In 1980, Connes introduced in \cite{CAlgGeoDiff} what he called a ``differential structure'' induced by a Lie group action on a $C^*$-algebra. This early notion was later superseded by the framework of \emph{spectral triples}, devised axiomatically by Connes in \cite{Reality,Grav}.

A choice of spectral triple comes down to fixing an unbounded operator on a representation space for a $C^*$-algebra which corresponds to a Dirac-type operator and is thus (in the unital case) supposed to be a self-adjoint operator of compact resolvent (see Def. \ref{Def:TrSp}).

Lie group actions and spectral triples thus provide two different approaches to ``smoothness'' for noncommutative spaces -- for example, the boundedness of the commutators $[D,a]$ is a measure of regularity for $a$. The validity of this approach was confirmed to some extent by Connes' \emph{reconstruction theorem} \cite{recons}: under slight strengthening of the axioms of \cite{Grav}, spectral triples on commutative $C^*$-algebras arise from smooth manifolds.

\medbreak

If we think of $C^*$-algebras as sets of continuous functions on NC spaces, to study a ``smooth noncommutative manifold'' requires an analog of smooth functions on this ``manifold''. In other words, we need a ``{smooth subalgebra}'' $\Aa \subseteq A$. Such smooth subalgebras can be obtained in several ways (see \textsl{e.g.} \cite{BlackCuntz, OkaKThBost}). Here, we follow the familiar construction (see Prop. \ref{Prop:Aa} below) obtaining $\Aa$ from an action of Lie group $G$ on $A$. A natural question arises from this construction: if we consider a sub-group $G_0 \subseteq G$, how do we distinguish the two associated smooth subalgebras $\Aa_0$ and $\Aa$? 

In our case, this difficulty is solved by ergodicity: if the action of $G$ is ergodic, we can construct a summable spectral triple for $\Aa$ -- which need not be possible for $G_0$, if its action is not ergodic (see Example \ref{Ex:Tore1NC} below).

%
%
%
%

\bigbreak

Our paper originates from Rieffel's article \cite{Rieffel98} in which he considers unbounded Dirac-type operators of the form
\begin{equation}
\label{Eqn:DiracIni}
D = \sum \; \partial_j\otimes F_j,
\end{equation}
 where $\partial_j$ denotes the infinitesimal generators of a Lie group action and $F_j$ are generators of a Clifford algebra (see \cite{Rieffel98}, p.226). His paper also puts special emphasis on ergodic actions. 
The expression \eqref{Eqn:DiracIni} appeared in \cite{GeomQHM} too, where it is presented as a ``general principle of construction of spectral triple''.

In the present document we determine conditions under which Rieffel's operator yields a spectral triple in the sense of Connes (see Def. \ref{Def:TrSp} below) and study its summability properties.  As \cite{Rieffel98} seems to suggest, we obtain a finitely {summable} spectral triple for ergodic actions of compact Lie groups. More precisely (see Theorem \ref{Thm:Main} below for the exact forms of the spectral triples):
\begin{Th}
If $G$ is a compact Lie group of dimension $n$ acting ergodically on a unital $C^*$-algebra $A$, then
\begin{enumerate}[1.]
\item
there is an explicit $n^+$-summable spectral triple on $A$, which is even when $n$ is even, has a real structure and satisfies the first order condition;
\item 
 if we are given a covariant representation of $(A,G)$ on $\Hh_0$ which satisfies a certain finiteness condition, then we can manufacture a $n^+$-summable spectral triple from it. 
\end{enumerate}
\end{Th}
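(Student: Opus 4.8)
The plan is to realize Rieffel's operator \eqref{Eqn:DiracIni} concretely on a GNS space and then verify the axioms of Definition \ref{Def:TrSp} one at a time. First I would exploit ergodicity through the theorem of H{\o}egh-Krohn, Landstad and St{\o}rmer: an ergodic action of a compact group on a unital $C^*$-algebra admits a \emph{unique} $G$-invariant state $\tau$, which is moreover a trace. I form the GNS representation $\Hh_\tau = L^2(A,\tau)$, on which $G$ acts unitarily, choose an invariant inner product on $\AlgLie$ with orthonormal basis $X_1,\dots,X_n$ and associated generators $\partial_j$, and take the spinor module $\St$ of dimension $2^{\lfloor n/2\rfloor}$ carrying bounded operators $F_1,\dots,F_n$ satisfying the Clifford relations. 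The candidate operator is then $D=\sum_j \partial_j\otimes F_j$ on $\Hh=\Hh_\tau\otimes\St$, initially defined on the smooth vectors.

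The crucial analytic point -- flagged in the abstract -- is essential self-adjointness, and here compactness of $G$ does the work. By Peter--Weyl, $\Hh_\tau$ decomposes as an orthogonal direct sum of finite-dimensional $G$-isotypic subspaces, each of which is invariant under every $\partial_j$ (the Lie-algebra action preserves the isotypic decomposition) and hence under $D$. Thus $D$ is block-diagonal with finite-dimensional, bounded, symmetric blocks, so it is essentially self-adjoint on the algebraic direct sum of these blocks, which I take as its core. For the compact-resolvent property I compute, using the Clifford relations and $[\partial_j,\partial_k]=\sum_l c_{jk}^l\,\partial_l$,
\[
D^2 = \Delta\otimes 1 + (\text{first-order terms}),\qquad \Delta=-\textstyle\sum_j\partial_j^2,
\]
where $\Delta$ acts on the $\pi$-isotypic block as the Casimir scalar of $\pi$. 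Since these scalars tend to $+\infty$ along $\hat G$ and dominate the first-order remainder on each block, the spectrum of $D^2$ is discrete with eigenvalues accumulating only at infinity, giving compact resolvent. Boundedness of $[D,a\otimes 1]=\sum_j\partial_j(a)\otimes F_j$ for $a\in\Aa$ is then immediate, since $\partial_j(a)\in\Aa$.

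For summability I again use ergodicity, now through the multiplicity bound of H{\o}egh-Krohn, Landstad and St{\o}rmer: in $\Hh_\tau$ the irreducible $\pi$ occurs with multiplicity at most $\dim\pi$. Hence
\[
\Tr\bigl((1+D^2)^{-s/2}\bigr)\ \le\ C\sum_{\pi\in\hat G}(\dim\pi)^2\,\bigl(1+\mathrm{Cas}(\pi)\bigr)^{-s/2},
\]
which is exactly the zeta function one meets for the canonical Dirac operator on the group $G$ itself. The Weyl law for the Casimir eigenvalues on an $n$-dimensional compact Lie group then makes this sum converge precisely for $s>n$, yielding $n^+$-summability -- and no better, as the torus example confirms.

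Finally I would install the extra structure. When $n$ is even, the chirality element of the Clifford algebra grades $\St$, anticommutes with $D$ and commutes with $A$, giving the even structure. The real structure $J$ I build as the tensor product of the modular conjugation $J_\tau$ of the tracial state -- which implements the opposite-algebra action, so that $[a,Jb^*J^{-1}]=0$ -- with a charge conjugation on $\St$; verifying the KO-dimension signs $J^2=\pm1$ and $JD=\pm DJ$ is a finite Clifford-algebra computation. The first-order condition $[[D,a],Jb^*J^{-1}]=0$ then follows because $[D,a]$ acts by left multiplication by $\partial_j(a)$ on $\Hh_\tau$ while $Jb^*J^{-1}$ acts from the right, and the two commute. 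Part~2 is the same construction with $\Hh_\tau$ replaced by the given covariant space $\Hh_0$, the stated finiteness condition playing exactly the role of the ergodic multiplicity bound in keeping the summability sum convergent. I expect the main obstacle to lie in the two analytic cores of the argument -- rigorously justifying essential self-adjointness and pinning down the convergence exponent in the summability estimate -- since the algebraic verifications are comparatively routine.
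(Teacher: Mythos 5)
Your overall architecture coincides with the paper's: the H{\o}egh-Krohn--Landstad--St{\o}rmer theorem supplies the invariant trace (Rem. \ref{Rem:Trace}), the GNS space carries the covariant representation (Lemma \ref{Lem:GinvTrace}), Peter--Weyl gives finite-dimensional $G$-invariant blocks on which $D$ is block-diagonal and symmetric -- which is exactly how the paper proves essential selfadjointness in Prop. \ref{Prop:CovRep} -- and the grading, the real structure $J=J_0\otimes J_S$ and the first-order condition are verified by the same left/right-multiplication arguments as in Prop. \ref{Prop:BasicDirac} and Prop. \ref{Prop:BasicDirac2}. Where you genuinely diverge is the compact-resolvent and summability step. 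The paper never computes $D^2$: it uses the HKLS multiplicity bound $m_\ell\leqslant d_\ell$ to produce a $G$-equivariant inclusion $\Hh_0\hookrightarrow L^2(G)$, observes that $D$ coincides block-by-block with the operator $D_{\text{ref}}$ on $L^2(G)\otimes S$ -- which is the Dirac operator of the spin manifold $G$, hence $n^+$-summable by classical results -- and then transfers summability by a singular-value comparison: the eigenvalues of $|D|$ are those of $|D_{\text{ref}}|$ with smaller multiplicities, so $\sigma_k\big((1+D^2)^{-1/2}\big)\leqslant\sigma_k\big((1+D_{\text{ref}}^2)^{-1/2}\big)$ and membership in $\Ll^{n^+}$ is inherited. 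Your route instead performs a Lichnerowicz-type computation $D^2=\Delta\otimes 1+(\text{first order})$ and estimates via Casimir eigenvalues; this avoids identifying the reference operator with a geometric Dirac operator (and, as the paper remarks, the same multiplicity-control mechanism could serve in more general settings), at the price of justifying that the first-order remainder, of norm $O\big(\mathrm{Cas}(\pi)^{1/2}\big)$ on each block, is dominated by the Casimir.

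However, one step as written does not prove the claim: the deduction ``the zeta sum converges precisely for $s>n$, yielding $n^+$-summability''. Convergence of $\Tr\big((1+D^2)^{-s/2}\big)$ for all $s>n$ only places $(1+D^2)^{-1/2}$ in $\bigcap_{s>n}\Ll^s$, which \emph{strictly} contains the Dixmier ideal $\Ll^{n^+}$ of Def. \ref{Def:DixmierSum}: for $n\geqslant 2$, singular values $\mu_k=k^{-1/n}(\log k)^{1/n}$ have finite zeta function for every $s>n$, yet $\sigma_k/k^{(n-1)/n}\sim c\,(\log k)^{1/n}\to\infty$. To land in $\Ll^{n^+}$ you must use Weyl's law in its eigenvalue-counting form: your block estimates give $N_{D^2}(\lambda)\leqslant C\,N_{\Delta_G}(\lambda)=O(\lambda^{n/2})$, hence the $k$-th singular value of $(1+D^2)^{-1/2}$ is $O(k^{-1/n})$ and $\sigma_k=O(k^{(n-1)/n})$, which is the required bound; alternatively, replace the zeta argument by the paper's direct comparison of the partial sums $\sigma_k$. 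The same caveat applies to your Part 2, where you should also make explicit how the finiteness condition $\Hh_0^\infty\simeq p\Aa^k$ controls multiplicities: the paper does this by embedding $\Hh_0$ covariantly into $\GNS(A\otimes\C^k,\underline{\tau})$ and comparing with $D_{\text{ref}}\otimes 1_k$.
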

The above theorem is relevant on two counts:
\begin{itemize}
\item
Point 1. recovers for instance the usual spectral triples for noncommutative tori of any dimension (see Section \ref{Sec:Example});
\item 
Point 2. links algebraic or ``geometric'' properties -- namely the existence of a covariant representation -- with analytic properties \textsl{i.e.} the selfadjointness and finite summability of $D$.
\end{itemize}
For covariant representations of non-compact groups of dimension $n$, we obtain a symmetric operator with bounded commutators (see Prop. \ref{Prop:BasicDirac}), which is graded when $n$ is even. Furthermore, if the Hilbert space of the triple comes from a $G$-invariant trace \textsl{via} the GNS construction, an associated real structure is available (see Prop. \ref{Prop:BasicDirac2}), thereby refining the ``general principle'' mentioned in \cite{GeomQHM}.

\medbreak

A general mean of obtaining spectral triples is given in \cite{DeformationsConnesLandi}. This construction is similar to ours in the sense that it assumes a certain symmetry on the initial space -- a Riemannian manifold whose isometry group has rank at least $2$ in \cite{DeformationsConnesLandi}, an ergodic action of compact Lie group for us -- and estimates the summability of the resulting spectral triple. On the one hand, they rely on a deformation of a (commutative) geometric situation, while we have purely ``noncommutative'' assumptions, but on the other hand, their result yields orientability and Poincaré duality besides the summability, real structure and first order properties (see \cite{Grav} for the definitions of these axioms).

Our results more closely ressemble the general construction presented in \cite{TrSpLieGpWahl}. Nevertheless, the aforementioned article focuses on semifinite spectral triples and their index properties in the setting of general actions of compact Lie groups, while we put emphasis on ``regular'' spectral triples and their summability properties in the case of ergodic actions.

The notion of ``ergodic action'' -- which plays a crucial role in our results -- is well-studied, but our argument depends only on the seminal work \cite{ErgodCpctGpHKLS} of H\o{}egh-Krohn, Landstad and St\o{}rmer. We expect the vast literature on this topic to provide us eventually with new tractable classes of examples. However, the many classical examples of spectral triples are already almost entirely covered by our framework, as we show in the last section.

Another point of view on our results is that this article (together with the forthcoming \cite{TrSpPCGGG}) provides some sort of ``backward compatibility'' of the original article \cite{CAlgGeoDiff} with the more recent framework of spectral triples.

\bigbreak

This article starts with a preliminary section \ref{Sec:DefTrSp}, defining precisely the notion of spectral triple that we will use. We prove that, given a covariant representation of $A$ and $G$ on a Hilbert space $\Hh_0$, a symmetric unbounded operator $D$ with bounded commutators arises naturally. In the next section \ref{Sec:GNS}, we proceed with the particular case when $\Hh_0$ arises from the GNS construction and show that a real structure (implying the existence of a selfadjoint extension of $D$) exists in this case. Going back to general covariant representations, we establish carefully in section \ref{Sec:Compact} that if $G$ is compact, $D$ is essentially selfadjoint. These two threads of results are finally combined in Section \ref{Sec:Main}, where the main theorem is established. Finally, the last section \ref{Sec:Example} relates our results to prior work, by examining remarks, examples and counterexamples.

\section{Spectral triples and covariant representations}
\label{Sec:DefTrSp}

For most of this article, we only consider \emph{unital} $C^*$-algebras and \emph{nondegenerated representations} $\pi \colon A \to B(\Hh)$ of $C^*$-algebras, \textsl{i.e.} $\pi(1_A) = \id_\Hh$. In Section \ref{Sec:Example}, some examples involve nonunital $C^*$-algebras, in which case condition (i) of Def. \ref{Def:TrSp} is replaced by ``$\pi(a)(1+D^2)^{-1}$ is compact for all $a\in A$''.

\smallbreak

The expression ``spectral triple'' has been used to denote several different notions -- thus we need here to carefully define the meaning we give to it. A basic definition of this expression is the following:
\begin{Def} 
\label{Def:TrSp} Let $A$ be a $C^*$-algebra. An odd \emph{spectral triple}, also called odd \emph{unbounded Fredholm module}, is a triple $(\pi, \Hh, D)$ where:
\begin{itemize}
\item
$\Hh$ is a Hilbert space and $\pi \colon A\to B(\Hh)$ a $*$-representation of $A$ as bounded operators on $\Hh$
\item
a selfadjoint unbounded operator $D$ -- which we will call the \emph{Dirac operator} -- defined on the domain $\dom(D)$
\end{itemize}
such that
\begin{enumerate}[(i)]
\item
$(1+D^2)^{-1/2}$ is compact,
\item
the subalgebra $\Aa$ of all $a\in A$ such that
\begin{align*}
\pi(a) (\dom(D)) &\subseteq \dom(D)
&
[D,\pi(a)] & \text{ extends to a bounded map on }\Hh
\end{align*}
is dense in $A$.
\end{enumerate}
An \emph{even spectral triple} is given by the same data, but we further require that a grading $\gamma$ be given on $\Hh$ such that (i) $A$ acts by even operators, (ii) $D$ is odd.
\end{Def}

\begin{Rem}
In the above definition, we do not require that the representation $\pi$ be faithful. However, many references (including Connes' articles) on this topic include this additional constraint.
\end{Rem}

The situation is further complicated by the different additional properties which one can require on spectral triples. The most complete collection of such ``axioms'' is surely the one proposed by Connes in his article \cite{Grav} (later amended in \cite{recons} to prove his reconstruction theorem). Here, we will only need a few of these. In (essential) accordance with the nomenclature of \cite{Grav} and \cite{recons}, we call them \emph{reality}, \emph{order one}, \emph{finite summability} and \emph{finiteness}. References for them will be given when needed.


%

\begin{Assumption}
\label{Assumption1}
In this document, we assume that $G$ is a Lie group of finite dimension $n$ which acts on a $C^*$-algebra $A$ \textsl{via} a strongly continuous action $\alpha$. We denote by $\AlgLie $ the Lie algebra of $G$.
\end{Assumption}

Such a $G$-action defines a ``smooth version'' $\Aa$ of $A$ (see for instance Prop. 3.45 p.138 of \cite{EltNCG}): 
\begin{Prop}
\label{Prop:Aa}
The subalgebra $\Aa$ of $G$-smooth elements:
$$ \Aa := \big\{ a \in A : g \mapsto \alpha_g(a) \text{ is in }C^\infty (G, A) \big\} $$
is a dense sub-$*$-algebra of $A$, with a natural Fréchet structure, which is stable under holomorphic functional calculus.
\end{Prop}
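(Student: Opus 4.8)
The plan is to fix a basis $X_1,\dots,X_n$ of $\AlgLie$ and work throughout with the infinitesimal generators $\partial_X a := \frac{d}{dt}\big|_{t=0}\,\alpha_{\exp(tX)}(a)$; for a multi-index $\beta\in\N^n$ I write $\partial^\beta := \partial_{X_1}^{\beta_1}\cdots\partial_{X_n}^{\beta_n}$, so that $\Aa$ is precisely the set of common smooth vectors on which every $\partial^\beta$ is defined. With this in hand the sub-$*$-algebra claim is the formal part: if $a,b\in\Aa$ then $g\mapsto\alpha_g(ab)=\alpha_g(a)\,\alpha_g(b)$ and $g\mapsto\alpha_g(a^*)=\alpha_g(a)^*$ are smooth because multiplication in $A$ is continuous and bilinear while the involution is a continuous real-linear isometry, and the Leibniz rule computes $\partial^\beta$ on products.

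For density I would use the standard G\aa{}rding smoothing. For $f\in C_c^\infty(G)$ set $\alpha_f(a):=\int_G f(g)\,\alpha_g(a)\,dg$ against (left) Haar measure. A change of variables gives $\alpha_h\big(\alpha_f(a)\big)=\alpha_{L_hf}(a)$ with $L_hf(g)=f(h^{-1}g)$; since $h\mapsto L_hf$ is smooth into $C_c^\infty(G)$ and one may differentiate under the integral sign, $\alpha_f(a)\in\Aa$. Letting $f$ run over an approximate identity supported in shrinking neighbourhoods of the unit $e\in G$, strong continuity of $\alpha$ yields $\alpha_f(a)\to a$, so $\Aa$ is dense.

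The natural Fr\'echet structure is the countable family of seminorms $p_\beta(a):=\|\partial^\beta a\|$, which separate points already through $p_0=\|\cdot\|$. The one genuine verification here is completeness: given a sequence Cauchy in every $p_\beta$, each $\partial^\beta a_k$ converges in $A$ to some $b_\beta$, and one must check that $b_0\in\Aa$ with $\partial^\beta b_0=b_\beta$. This I would obtain by writing the finite Taylor expansion of $g\mapsto\alpha_g(a_k)$ with integral remainder, passing to the limit, and reading off smoothness of $g\mapsto\alpha_g(b_0)$ together with the identification of its derivatives.

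Stability under holomorphic functional calculus is the most substantial point, and I would deduce it from spectral invariance. The key lemma is that $a\in\Aa$ invertible in $A$ forces $a^{-1}\in\Aa$: since each $\alpha_g$ is an automorphism, $\alpha_g(a^{-1})=\alpha_g(a)^{-1}$, and the orbit map $g\mapsto\alpha_g(a)$ is smooth with values in the open set of invertibles, on which inversion is smooth, so the composite $g\mapsto\alpha_g(a^{-1})$ is smooth. Applied to $a-\lambda$ this gives $\sigma_\Aa(a)=\sigma_A(a)$, and for $f$ holomorphic on a neighbourhood of this common spectrum with $\Gamma$ a suitable contour I would represent
\[
f(a)=\frac{1}{2\pi i}\oint_\Gamma f(\lambda)\,(\lambda-a)^{-1}\,d\lambda .
\]
I expect the main obstacle to be showing that this Banach-valued integral converges in the Fr\'echet topology of $\Aa$, i.e. bounding every $p_\beta\big((\lambda-a)^{-1}\big)$ uniformly along $\Gamma$. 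Iterating $\partial_X(b^{-1})=-\,b^{-1}(\partial_X b)\,b^{-1}$ expresses $\partial^\beta\big((\lambda-a)^{-1}\big)$ as a finite sum of products of resolvents $(\lambda-a)^{-1}$ and derivatives $\partial^\gamma a$ with $|\gamma|\le|\beta|$; norm-continuity of the resolvent on the compact contour then makes each seminorm continuous, hence bounded, on $\Gamma$, so the integral converges in $\Aa$. Completeness of $\Aa$ finally places $f(a)$ in $\Aa$, which is the assertion.
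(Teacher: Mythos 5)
Your proposal is correct, but note that the paper offers no proof of this proposition at all: it simply cites Prop.~3.45 of \cite{EltNCG}, where the standard argument is given. Your proof is essentially that standard argument -- G\aa{}rding smoothing against $f\in C_c^\infty(G)$ for density, the derivation seminorms $p_\beta(a)=\|\partial^\beta a\|$ for the Fr\'echet structure, and spectral invariance (via smoothness of inversion on the invertibles) combined with convergence of the Cauchy contour integral in the complete Fr\'echet topology for stability under holomorphic functional calculus -- so there is nothing to compare beyond saying your write-up supplies the details the paper delegates to the reference.
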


\begin{Def}
\label{Def:Covar}
Under Assumption \ref{Assumption1}, a \emph{covariant representation of $A$ and $G$ on a Hilbert space $\Hh_0$} is a representation $\pi$ of $A$ together with a unitary and strongly continuous representation $U$ of $G$ on $\Hh_0$ which satisfy the following compatibility condition:
\begin{equation}
\label{Eqn:Cov}
\pi(\alpha_g(a)) \xi = U_g \pi(a) U_g^* \xi ,
\end{equation}
for all $a \in A$, $g \in G$ and $\xi \in \Hh_0$.
\end{Def}
In the present article, we sometimes abuse notations and write $a \xi$ instead of $\pi(a) \xi$. 

Given a covariant representation as above, we define a smooth domain $\Hh_0^\infty \subseteq \Hh_0$, using the same process as in Prop. \ref{Prop:Aa} \textsl{i.e.} 
\begin{equation}
\label{Def:Hh0inf}
\Hh_0^\infty  := \big\{ \xi \in \Hh_0 : g \mapsto U_g \xi \text{ is in }C^\infty (G, \Hh_0) \big\}
\end{equation}
It is clear from the definitions that $\Aa \Hh_0^\infty \subseteq \Hh_0^\infty $. Moreover, $\Hh_0^\infty$ is dense in $\Hh_0$, since it contains the dense ``G\r{a}rding's domain'' described in \cite{ReedSimonI}, p.306 
and initially introduced in \cite{LieGroupsGarding}.

\begin{Not}
\label{Not:Base}
We fix a basis $(X_j)_{j=1}^n$ of $\AlgLie $ and denote by
\begin{itemize}
\item
$\partial _j^\Aa$ the associated infinitesimal generators of the action of $G$ on $A$ defined by
$$\partial_j^\Aa(a):=\lim_{t\to 0} \frac{\alpha_{\exp(t X_j)}(a)-a}{t}$$
for any $a \in \Aa$;
\item
$\partial _i$ the associated generators of the action of $G$ on $\Hh_0$ defined in the same way. Since $U$ is unitary, which satisfy
\begin{equation}
\label{Eqn:SymGenInf}
 \langle \partial _j \xi, \eta \rangle + \langle \xi, \partial _j \eta \rangle = 0.
\end{equation}
\end{itemize}
\end{Not}

For all $\xi \in \Hh_0^\infty $ and all $a \in \Aa$, taking the derivative of \eqref{Eqn:Cov} we clearly have:
$$ \partial _j^\Aa(a) \xi = \partial_j(a \xi) - a \partial_j \xi .$$
In other words, for all $a \in \Aa$
\begin{equation}
\label{Eqn:Comm}
 [\partial _j, a] = \partial^\Aa _j(a) .
\end{equation}
%

\begin{Def}
Following \cite{EltNCG} p.174, for $n \in \N$ we denote $\C l(n)$ the universal unital $C^*$-algebra generated by $n$ selfadjoint elements $e_i$ which satisfy the relations:
\begin{equation}
\label{Eqn:Spin}
 e_j e_k + e_k e_j = 2 \delta_{jk} .
\end{equation}
A $\Z/2 \Z$-grading on $\C l(n)$ is induced by the automorphism $h$ defined by $h(e_i) = - e_i$. We denote $\C l^+(n)$ and $\C l^-(n)$ the $+1$ and $-1$ eigenspaces of $h$.
\end{Def}


The following identifications appear as Lemma 5.5 p.178 of \cite{EltNCG}: if $N = 2^m$, then
\begin{align}
\label{Eqn:IddCliff}
\C l(2 m) &= M_N(\C)
&
\C l(2m +1) &= M_N(\C) \oplus M_N(\C).
\end{align}
It follows from the above identification that, up to unitary equivalence, there is a unique representation of $\C l(n)$ for even $n$, and that there are two inequivalent representations for odd $n$. We fix $m$ such that $n =2 m$ (even case) or $n = 2 m +1$ (odd case) and define a \emph{chirality element} $\gamma$ (compare \cite{EltNCG}, Def. p.179) by
$$ \gamma = (-i)^m e_1 \cdots e_n .$$
$\gamma$ induces a grading operator, namely it satisfies $\gamma^* = \gamma$ and $\gamma^2 = 1$. Moreover $\gamma e_j \gamma = -e_j$ (in the even case) or $\gamma e_j \gamma = e_j$ (in the odd case). In other words, the grading in this case is inner. Hence, in the odd case, $\gamma$ is in the center of $\C l(n)$. For irreducible representations, $\gamma$ has to be sent to either $1$ or $-1$ -- and this distinguishes between the two possible irreducible representations of $\C l(n)$ for odd $n$. This also justifies that the chirality element does not appear in irreducible representations of odd Clifford algebras.


\bigbreak

Section 2 of the article \cite{ProductTrSpDD} provides an explicit set of generators of the irreducible representations of $\C l(n)$ for all $n$, together with an explicit involution $J_S$ and (in the even case) a grading operator $\gamma_S$. We summarise these results in the following:
\begin{Prop}
\label{Prop:MatF}
Consider a positive integer $n$ and an irreducible representation of $\C l(n)$ on a vector space $S$. Up to unitary equivalence, it is determined by $n$ matrices $F_j$ s.t.
\begin{align}
\label{Eqn:RelF}
F_j^* &= - F_j 
&
F_j F_k + F_k F_j &= - 2 \delta_{jk}.
\end{align}
If $n$ is even, a grading operator $\gamma_S$  is available which satisfies $\gamma_S^* = \gamma_S$, $\gamma_S^2 = 1$ and $\gamma_S F_j = -F_j \gamma_S$ for all $j$. There is an explicit anti-linear map $J_S$ s.t. for all $j = 1, \ldots , n$ and $s, s' \in S$,
\begin{align*}
\langle J_S s, J_S s' \rangle &= \langle s', s \rangle 
&
J_S^2 &= \varepsilon_J
&
J_S F_j &= \varepsilon_D F_j J_S
&
J_S \, \gamma_S &= \varepsilon_\gamma \gamma_S\,  J_S,
\end{align*}
where 
\begin{itemize}
\item
the last equality (and therefore $\varepsilon_\gamma$) only appears in the even cases,
\item
$\varepsilon_J, \varepsilon_D$ and $\varepsilon_\gamma$ are all $-1$ or $1$, the proper sign depending on $n$ modulo $8$:
\begin{center}
\begin{tabular}{cccccccccc}
\toprule
$n$ & $0$ & $2$ & $4$ & $6$ && $1$ & $3$ & $5$ & $7$ \\
\midrule
$\varepsilon_J$ & $+$ & $-$ & $-$ & $+$ && $+$ & $-$ & $-$ & $+$\\
$\varepsilon_D$ & $+$ & $+$ & $+$ & $+$ && $-$ & $+$ & $-$ & $+$\\
$\varepsilon_\gamma$ & $+$ & $-$ & $+$ & $-$ && & & & \\
\bottomrule
\end{tabular}
\end{center}
\end{itemize}
\end{Prop}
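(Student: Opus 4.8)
The plan is to reduce the statement to an explicit construction of the matrices $F_j$ and then to read off the listed properties and signs by direct computation. First I would observe that the substitution $F_j = i\, e_j$ turns the self-adjoint Clifford generators of \eqref{Eqn:Spin} into skew-adjoint matrices: indeed $(ie_j)^* = -ie_j$ and $(ie_j)(ie_k) + (ie_k)(ie_j) = -(e_je_k + e_ke_j) = -2\delta_{jk}$, which is exactly \eqref{Eqn:RelF}, while conversely any system satisfying \eqref{Eqn:RelF} yields a representation of $\C l(n)$ via $e_j = -iF_j$. Existence together with the uniqueness clauses then follows from the identification \eqref{Eqn:IddCliff}: since $\C l(2m) \cong M_N(\C)$ is simple it has a single irreducible representation up to unitary equivalence, whereas $\C l(2m+1) \cong M_N(\C) \oplus M_N(\C)$ has exactly the two inequivalent ones.

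To make everything explicit I would use the standard iterated Pauli construction. Writing $\sigma_1, \sigma_2, \sigma_3$ for the Pauli matrices and $S = (\C^2)^{\otimes m}$ with $N = 2^m$, I set for $1 \le k \le m$
\begin{align*}
F_{2k-1} &= \sigma_3^{\otimes (k-1)} \otimes (i\sigma_1) \otimes 1^{\otimes(m-k)},
&
F_{2k} &= \sigma_3^{\otimes(k-1)} \otimes (i\sigma_2) \otimes 1^{\otimes(m-k)},
\end{align*}
and, in the odd case, adjoin $F_{2m+1} = \pm i\,\sigma_3^{\otimes m}$. A short computation from $\sigma_a\sigma_b + \sigma_b\sigma_a = 2\delta_{ab}$ shows that the $\sigma_3$-strings produce exactly one anticommutation between any two distinct generators, so that \eqref{Eqn:RelF} holds; the two choices of sign for $F_{2m+1}$ recover the two odd representations, equivalently the two values of the central chirality element.

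Next I would exhibit $\gamma_S$ and $J_S$. In the even case one takes $\gamma_S$ to be the product $F_1\cdots F_n$ rescaled by the unique phase making it self-adjoint; then $\gamma_S^2 = 1$ is immediate, and commuting each $F_j$ through the product shows $\gamma_S F_j = -F_j\gamma_S$ precisely because $n$ is even. For $J_S$ I would write $J_S = B\, C$, where $C$ is entrywise complex conjugation in the Pauli basis and $B$ is a suitable product of the $F_j$ (for instance of the real ones among them: note $i\sigma_1$ is purely imaginary while $i\sigma_2$ is real, so half the generators are real and half imaginary). Antiunitarity $\langle J_S s, J_S s'\rangle = \langle s', s\rangle$ follows from unitarity of $B$ and $C$, and $B$ is arranged so that $B\,\bar F_j\, B^{-1} = \pm F_j$ with a uniform sign, which yields the relation $J_S F_j = \varepsilon_D F_j J_S$.

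The heart of the matter is computing the three signs, and this is where I expect the real work to lie. Each is a bookkeeping quantity: $\varepsilon_J = J_S^2 = B\bar B$ is governed by the reordering needed to simplify the product $B\bar B$ of anticommuting factors; $\varepsilon_D$ by the parity count of how conjugation flips each $F_j$; and $\varepsilon_\gamma$ by commuting $B$ past $\gamma_S$. Carrying out these parity counts, one finds that the answers depend only on $n \bmod 8$, reproducing the table. The cleanest organisation is either a direct case analysis for $n = 1, \dots, 8$ together with the periodicity isomorphism $\C l(n+8)\cong \C l(n)\otimes \C l(8)$, which multiplies the $J$- and $\gamma$-data and hence the signs, or an induction on $m$ tracking how each sign changes when two generators are adjoined. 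The main obstacle is precisely this sign combinatorics: the anticommutation-driven reordering signs and the count of purely imaginary generators must be tracked with care, and it is their interplay that produces the mod-$8$ periodicity.
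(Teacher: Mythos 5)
First, a point of context: the paper itself does not prove Prop.~\ref{Prop:MatF} at all --- it is explicitly a \emph{summary} of Section 2 of \cite{ProductTrSpDD}, so the only ``proof'' on offer is the citation. Your sketch is essentially a re-derivation of what that source does: iterated Pauli-matrix generators, chirality as the rescaled product $F_1\cdots F_n$, and $J_S$ built as (product of generators)$\,\circ\,$(complex conjugation). The reduction $F_j = i e_j$, the uniqueness statements via \eqref{Eqn:IddCliff}, and the verification of \eqref{Eqn:RelF} for the Jordan--Wigner-type generators are all correct. So the strategy is the right one; the problem is what is left undone.

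The genuine gap is that the table --- which \emph{is} the content of the proposition --- is never computed: ``carrying out these parity counts, one finds\dots'' is precisely the step a proof must supply. Moreover, the one concrete recipe you offer fails to reproduce the table. Take $n = 2m$ and $B = F_2 F_4 \cdots F_{2m}$, the product of the real generators (those built from $i\sigma_2$). Then indeed $B \bar F_j B^{-1}$ has a uniform sign, but that sign is $(-1)^{m+1}$: each real $F_j$ anticommutes with the other $m-1$ factors of $B$, each imaginary $F_j$ anticommutes with all $m$ factors and picks up an extra $-1$ from conjugation. So for $n = 4$ and $n = 8$ your $J_S$ satisfies $J_S F_j = -F_j J_S$, contradicting the table's $\varepsilon_D = +$ in all even dimensions. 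To match the stated table the choice of $B$ must depend on $n$: the real generators when $n \equiv 2 \pmod 4$, the imaginary ones when $n \equiv 0 \pmod 4$ (equivalently, multiply $J_S$ by $\gamma_S$, which flips $\varepsilon_D$). This is not a technicality: in the even case \emph{both} choices are legitimate real structures --- the $J_\pm$ of \cite{ProductTrSpDD} mentioned in the remark following the proposition --- with genuinely different sign tables, so a proof must specify which is being constructed for each residue of $n$ and then verify all three signs for that choice (one checks, e.g., $J_S^2 = (-1)^{m(m+1)/2}$ resp.\ $(-1)^{m(m-1)/2}$ for the two recipes, which does match the $\varepsilon_J$ row). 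Finally, your periodicity shortcut also needs repair as stated: in $\C l(n+8) \cong \C l(n) \otimes \C l(8)$ the naive generators $F_j \otimes 1$ and $1 \otimes F'_k$ \emph{commute} rather than anticommute, so one must use twisted generators such as $F_j \otimes \gamma_8$ (or a graded tensor product); only then, and only because all signs of $\C l(8)$ are $+$, do the $\varepsilon$'s multiply as you claim.
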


\begin{Rem}
In the original article, the matrices $F_j$ associated to $\C l(n)$ are denoted $\gamma^{j}_{(n)}$ (for $n$ even) and $\gamma^{j}_{(n), \pm}$ (for $n$ odd, the sign corresponding to the two irreducible representations) in \cite{ProductTrSpDD}. If the representation of $\C l(n)$ is denoted $\pi$, these matrices $F_j$ correspond to $i \pi(e_j)$ for $e_j$, generators of $\C l(n)$. In the even case, the article \cite{ProductTrSpDD} actually isolate two possible antilinear maps denoted $J_\pm$ (see Section 2.3 p.1836).
\end{Rem}

We are now ready to construct a symmetric unbounded operator $D$:
\begin{Prop}
\label{Prop:BasicDirac}
Under Assumption \ref{Assumption1}, and for the representation of $\C l(n)$ of Prop. \ref{Prop:MatF}, the equation:
\begin{equation}
\label{Eqn:Dirac}
 D = \sum_{j=1}^n \partial _j \otimes F_j ,
\end{equation}
defines a symmetric unbounded operator $D$ on $\Hh:= \Hh_0 \otimes S$ with domain $\dom(D)=\Hh^\infty _0 \otimes_\C S$. Moreover, 
\begin{enumerate}[(i)]
\item 
 for any $ a \in \Aa$, the commutators with $D$ are bounded. More precisely:
\begin{align*}
a (\dom(D)) &\subseteq \dom(D)
&
[D, a] &= \sum \partial_j^\Aa(a) \otimes F_j;
\end{align*}
\item
if $n$ is even, there is a selfadjoint \emph{grading operator} $\gamma$ such that for all $a \in A$,
\begin{align*}
\gamma^2 &= 1
&
\gamma a &= a \gamma
&
\gamma (\dom D) &\subseteq \dom(D)
&
\gamma D = - D \gamma;
\end{align*} 
\end{enumerate}
\end{Prop}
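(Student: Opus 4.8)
The plan is to exploit two structural facts that make everything term-by-term and essentially algebraic: $D = \sum_{j=1}^n \partial_j \otimes F_j$ is a \emph{finite} sum, and the spinor space $S$ is finite-dimensional, so no analytic subtlety enters through the Clifford leg. First I would check that $D$ is well-defined on $\dom(D) = \Hh_0^\infty \otimes_\C S$: each $\partial_j$ carries $\Hh_0^\infty$ into $\Hh_0^\infty$ (the space of smooth vectors is invariant under the infinitesimal generators of $U$), while each $F_j$ is a fixed matrix on $S$, so $D$ maps $\Hh_0^\infty \otimes_\C S$ into $\Hh_0 \otimes S = \Hh$. Density of this domain is immediate from the density of $\Hh_0^\infty$ in $\Hh_0$ together with $\dim_\C S < \infty$.

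For symmetry I would show that each summand $\partial_j \otimes F_j$ is symmetric and conclude by sesquilinearity. On elementary tensors $\xi_1 \otimes s_1$, $\xi_2 \otimes s_2$ with $\xi_i \in \Hh_0^\infty$ and $s_i \in S$, the inner product $\langle (\partial_j \otimes F_j)(\xi_1 \otimes s_1), \xi_2 \otimes s_2 \rangle$ factors as $\langle \partial_j \xi_1, \xi_2 \rangle \, \langle F_j s_1, s_2 \rangle$. Applying the skew-symmetry \eqref{Eqn:SymGenInf} to the first factor and the skew-adjointness $F_j^* = -F_j$ from \eqref{Eqn:RelF} to the second introduces two sign changes that cancel, recovering $\langle \xi_1 \otimes s_1, (\partial_j \otimes F_j)(\xi_2 \otimes s_2) \rangle$. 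Summing over $j$ gives that $D$ is symmetric on $\dom(D)$.

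For (i), note that $a$ acts as $\pi(a) \otimes 1$, so the invariance $a(\dom D) \subseteq \dom(D)$ follows at once from the inclusion $\Aa \Hh_0^\infty \subseteq \Hh_0^\infty$ recorded earlier. On $\dom(D)$ the commutator then splits as $[D, a] = \sum_j [\partial_j, \pi(a)] \otimes F_j$, and substituting the identity \eqref{Eqn:Comm}, namely $[\partial_j, a] = \partial_j^\Aa(a)$, yields $[D, a] = \sum_j \partial_j^\Aa(a) \otimes F_j$. This extends to a bounded operator on $\Hh$ because $\partial_j^\Aa(a) \in \Aa \subseteq A$ acts by a bounded operator and each $F_j$ is bounded on the finite-dimensional $S$. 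For (ii), I would set $\gamma := 1 \otimes \gamma_S$ with $\gamma_S$ the grading supplied by Prop. \ref{Prop:MatF}. Then $\gamma^* = \gamma$ and $\gamma^2 = 1$ come directly from the corresponding properties of $\gamma_S$; the relation $\gamma a = a \gamma$ holds because $\pi(a)$ and $\gamma_S$ act on different tensor factors; domain invariance holds since $\gamma_S(S) = S$; and $\gamma D = -D\gamma$ is immediate from $\gamma_S F_j = -F_j \gamma_S$.

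The argument is essentially routine, and I expect no genuine obstacle. The only point deserving care is the bookkeeping of domains — verifying that $D$, the algebra action and $\gamma$ all leave the algebraic tensor product $\Hh_0^\infty \otimes_\C S$ invariant — but this is guaranteed by the stated invariance of $\Hh_0^\infty$ under the $\partial_j$ and under $\Aa$. I would deliberately \emph{not} address self-adjointness here, since the present statement only asserts symmetry, the essential self-adjointness being the business of the later Section \ref{Sec:Compact}.
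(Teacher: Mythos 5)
Your proof is correct and follows essentially the same route as the paper's: symmetry via the cancelling sign changes from \eqref{Eqn:SymGenInf} and $F_j^* = -F_j$ on elementary tensors, the commutator formula via \eqref{Eqn:Comm}, and the grading $\gamma = 1 \otimes \gamma_S$ with the same checks. You merely spell out some domain bookkeeping (invariance of $\Hh_0^\infty$ under the $\partial_j$ and under $\Aa$, boundedness of $\partial_j^\Aa(a) \otimes F_j$) that the paper dismisses as obvious.
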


\begin{Rem}
This Proposition, together with Prop. \ref{Prop:Aa}, already proves the point (ii) of Def. \ref{Def:TrSp}. Point (i) of the same definition will not appear until Theorem \ref{Thm:Main}. 

When $n$ is even, the operator $\gamma$ furnishes provisions for an \emph{even} spectral triple.
\end{Rem}

\begin{proof}
It is clear from the definition of $\Hh_0^\infty $ that $D$ is defined on $\Hh_0^\infty \otimes S$. Let us first prove that $D$ is symmetric on this domain: take $\xi \otimes s$ and $\xi' \otimes s'$ in $\Hh_0^\infty \otimes S$. Relying on \eqref{Eqn:SymGenInf}, we get:
\begin{multline*}
 \langle \xi \otimes s, D( \xi' \otimes s') \rangle = \left\langle \xi \otimes s, \sum \partial _j \xi' \otimes F_j s' \right\rangle \\
= \sum \left\langle \xi, \partial _j \xi' \right\rangle \, \langle s, F_j s' \rangle = \sum \left\langle - \partial _j \xi, \xi' \right\rangle \, \langle -F_j s, s' \rangle \\
= \left\langle \sum \partial _j\xi \otimes E_j s, \xi' \otimes s' \right\rangle = \langle D( \xi \otimes s), \xi' \otimes s' \rangle. 
\end{multline*}
Any $a \in \Aa$ sends $\Hh_0^\infty \otimes S$ to itself and $[D, a]$ extends to a bounded operator: this is obvious from the definitions of $\Aa$ and $\Hh_0^{\infty}$ together with equation \eqref{Eqn:Comm}.

\smallbreak

 To define the grading operator, we use the notations of Prop. \ref{Prop:MatF} and set $\gamma := 1 \otimes \gamma_S$. We clearly get $\gamma^2 = 1$ and $\gamma^* = \gamma$. Since $\gamma$ only acts on $S$ in the tensor product $\Hh = \Hh_0 \otimes S$, while $a \in A$ acts only on $\Hh_0$, they clearly commute. Moreover, $\dom(D)$ is clearly sent to itself by $\gamma$ and the anticommutation relation with $D$ is then easily checked using the properties of $\gamma_S$.
\end{proof}

\section{GNS representation}
\label{Sec:GNS}

\begin{Lem}
\label{Lem:GinvTrace}
Under Assumption \ref{Assumption1}, given a $G$-invariant trace $\tau$ on $A$, the Hilbert space $\Hh_0$ obtained by the GNS construction from $(A, \tau)$ is equipped with a covariant representation in the sense of Def. \ref{Def:Covar}.
\end{Lem}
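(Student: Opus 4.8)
The plan is to produce the unitary representation $U$ by transporting the action $\alpha$ through the GNS construction via the only reasonable formula $U_g[a] := [\alpha_g(a)]$, where $[a]$ denotes the class of $a \in A$ in the dense subspace of $\Hh_0$. I would begin by recalling the GNS data: $\Hh_0$ is the completion of $A/N_\tau$, with $N_\tau = \{a : \tau(a^*a) = 0\}$, for the inner product $\langle [a], [b]\rangle = \tau(b^* a)$ induced by $\tau$, and $\pi$ is the representation by left multiplication, $\pi(a)[b] = [ab]$. Since $A$ is unital and $\pi(1)[b] = [b]$, this representation is automatically nondegenerate, as required by our standing convention. The goal is then to check that the $U_g$ are well-defined unitaries, that $g \mapsto U_g$ is a strongly continuous homomorphism, and that the pair $(\pi, U)$ satisfies the covariance relation \eqref{Eqn:Cov}.

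The algebraic verifications are routine and rest on two facts: each $\alpha_g$ is a $*$-automorphism of $A$, and $\tau$ is $G$-invariant, i.e. $\tau \circ \alpha_g = \tau$. First I would observe that $U_g$ preserves the inner product on representatives,
\[
\langle U_g[a], U_g[b]\rangle = \tau\big(\alpha_g(b)^* \alpha_g(a)\big) = \tau\big(\alpha_g(b^* a)\big) = \tau(b^* a) = \langle [a], [b]\rangle,
\]
which simultaneously shows that $U_g$ maps $N_\tau$ into $N_\tau$, hence descends to the quotient, and that it is isometric, hence extends to an isometry of $\Hh_0$. The homomorphism property $U_g U_h = U_{gh}$ and $U_e = \id$ follow immediately from $\alpha_g \alpha_h = \alpha_{gh}$, so each $U_g$ has inverse $U_{g^{-1}}$ and is therefore unitary. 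For covariance I would compute, for $a, b \in A$,
\[
U_g \pi(a) U_g^* [b] = U_g \big[ a\, \alpha_{g^{-1}}(b)\big] = \big[ \alpha_g(a)\, b\big] = \pi(\alpha_g(a))[b],
\]
using $U_g^* = U_{g^{-1}}$ and the multiplicativity of $\alpha_g$; since the classes $[b]$ are dense, this yields \eqref{Eqn:Cov}.

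The one genuinely analytic point -- and the step I expect to be the main obstacle -- is strong continuity of $U$. Because the $U_g$ are uniformly bounded (they are unitaries), it suffices to prove $g \mapsto U_g \xi$ is continuous on the dense set of classes $[a]$, and by the homomorphism property together with unitarity of $U_{g_0}$ one has $\|U_g[a] - U_{g_0}[a]\| = \|U_{g_0^{-1}g}[a] - [a]\|$, so it is enough to establish continuity at the identity $g = e$. There I would estimate
\[
\|U_g[a] - [a]\|^2 = \tau\big( (\alpha_g(a) - a)^* (\alpha_g(a) - a)\big) \le \|\tau\|\, \|\alpha_g(a) - a\|^2,
\]
using that $\tau$ is positive and hence norm-bounded (with $\|\tau\| = \tau(1)$). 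Strong continuity of the action $\alpha$ means precisely that $g \mapsto \alpha_g(a)$ is norm-continuous, so the right-hand side tends to $0$ as $g \to e$; a standard $\varepsilon/3$ argument then upgrades continuity on the dense set to strong continuity of $U$ on all of $\Hh_0$. This completes the construction of the covariant representation $(\pi, U)$ on $\Hh_0$.
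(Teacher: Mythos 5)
Your proof is correct and follows essentially the same route as the paper's: define $U_g[a] := [\alpha_g(a)]$, use $G$-invariance of $\tau$ for unitarity, verify covariance on the dense image of $A$, and obtain strong continuity from strong continuity of $\alpha$ plus a density argument. You are in fact somewhat more careful than the paper, spelling out well-definedness modulo $N_\tau$ and the explicit norm estimate $\|U_g[a]-[a]\|^2 \le \tau(1)\,\|\alpha_g(a)-a\|^2$ behind the density argument.
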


\begin{proof}
From the definition of $\Hh_0 = \GNS(A, \tau)$, the image $H:= \{ [a], a \in A \}$ of $A$ in $\Hh_0$ is dense. We define the representations of $A$ and $G$ on this subset by:
\begin{align*}
\pi(a)\big( [a'] \big) &= [a a']
&
U_g\big( [a] \big) &= [\alpha_g(a)].
\end{align*}
It is readily checked from these expressions that \eqref{Eqn:Cov} is satisfied. Let us now prove that $U_g$ is unitary:
$$
\langle U_g \big( [a] \big), U_g \big( [a'] \big) \rangle = \tau( \alpha_g(a)^* \alpha_g(a') ) = \tau(a^* a') = \langle [a], [a'] \rangle 
$$
since $\tau$ is $G$-invariant. Since $\alpha$ is strongly continuous on $A$, it is clear that $U$ is strongly continuous on $H$. It then follows from a standard density argument that $U$ is strongly continuous on $\Hh_0$.
\end{proof}

The previous discussion of the real structure is motivated by the following slight generalisation of Theorem X.3 of \cite{ReedSimonII}: 
\begin{Lem}
\label{Lem:UnboundExt}
If $D$ is an unbounded formally selfadjoint operator on a Hilbert space $\Hh$ whose domain is $\dom(D) \subset \Hh$ and $J$ is an antilinear map such that for all $\xi, \eta \in \Hh$,
\begin{equation}
\label{Eqn:NormPres}
\langle J \xi, J \eta \rangle = \langle \eta, \xi \rangle 
\end{equation}
\begin{align*}
J^2 &= \varepsilon_J
&
J(\dom(D)) & \subseteq \dom(D)
&
J D &= \varepsilon_D D J,
\end{align*}
where $\varepsilon_J$ and $\varepsilon_D$ in $\{-1, 1\}$, then $D$ admits a selfadjoint extension.
\end{Lem}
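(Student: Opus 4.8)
The plan is to invoke von Neumann's theory of self-adjoint extensions: a densely defined symmetric operator admits a self-adjoint extension exactly when its two deficiency indices coincide, and the entire role of $J$ will be to manufacture a conjugate-linear isometric bijection between the two deficiency subspaces. Two preliminary observations set the stage. First, $J$ is automatically bijective, with two-sided inverse $J^{-1} = \varepsilon_J J$ coming from $J^2 = \varepsilon_J$, so that \eqref{Eqn:NormPres} exhibits $J$ as an antiunitary. Second, I would replace $D$ by its closure $\bar D$ (a symmetric, hence closable, operator has a densely defined adjoint): this alters neither $D^*$, nor the deficiency subspaces, nor the set of self-adjoint extensions, and the hypotheses persist because $J$ is isometric and therefore continuous, giving $J(\dom(\bar D))\subseteq\dom(\bar D)$ and $\bar D J = \varepsilon_D J\bar D$. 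Thus I may assume $D$ closed.

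The technical heart is to transport the relation $JD = \varepsilon_D DJ$ to the adjoint, namely to establish $J(\dom(D^*))\subseteq\dom(D^*)$ together with
\[
D^* J = \varepsilon_D\, J D^* \qquad \text{on } \dom(D^*).
\]
For this I would fix $\xi\in\dom(D^*)$ and an arbitrary $\phi\in\dom(D)$ and rewrite $\langle D\phi, J\xi\rangle$, repeatedly using the identity $\langle Ja, c\rangle = \langle J^{-1}c, a\rangle$ (immediate from \eqref{Eqn:NormPres}), the intertwining $JD\phi = \varepsilon_D DJ\phi$, and the fact that $J\phi\in\dom(D)$. Carrying the two antilinear conjugations through, the two factors of $\varepsilon_J$ cancel and one is left with $\langle D\phi, J\xi\rangle = \langle\phi, \varepsilon_D JD^*\xi\rangle$ for all $\phi\in\dom(D)$; by the definition of $D^*$ this is precisely the displayed intertwining. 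I expect this bookkeeping of conjugations and signs to be the step demanding the most care.

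With the adjoint relation in hand I would introduce the deficiency subspaces $N_\pm = \ker(D^*\mp i) = \Ran(D\pm i)^\perp$. For $\xi\in N_+$ one has $D^*\xi = i\xi$, so antilinearity of $J$ yields $D^*(J\xi) = \varepsilon_D J(i\xi) = -\varepsilon_D\, i\, J\xi$. When $\varepsilon_D = +1$ this says $J\xi\in N_-$, and symmetrically $J(N_-)\subseteq N_+$; being the restriction of a conjugate-linear bijection satisfying \eqref{Eqn:NormPres}, the map $J\colon N_+\to N_-$ is a conjugate-linear isometric bijection, so $\dim N_+ = \dim N_-$. Von Neumann's criterion then delivers the self-adjoint extension, the value of $\varepsilon_J$ entering only through the invertibility of $J$.

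The step I expect to be the genuine obstacle is the sign $\varepsilon_D = -1$. The same computation then gives $J(N_\pm)\subseteq N_\pm$: the conjugation now stabilises each deficiency subspace instead of interchanging them, so it provides no comparison between $\dim N_+$ and $\dim N_-$, and equality of the deficiency indices cannot be read off from $J$ alone. Settling the anticommuting case therefore seems to require input beyond the abstract symmetry -- for the operators $D = \sum_j \partial_j\otimes F_j$ of Prop. \ref{Prop:BasicDirac} the natural extra structure is that each $\partial_j$ generates a strongly continuous unitary one-parameter group and is hence essentially skew-adjoint by Stone's theorem -- and this is where I would concentrate the remaining effort.
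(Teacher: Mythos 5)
Your treatment of the case $\varepsilon_D=+1$ is correct and complete: closing $D$, transporting the intertwining relation to $D^*$ (your sign bookkeeping is right: $D^*J=\varepsilon_D JD^*$ on $\dom(D^*)$), and observing that $J$ then exchanges the deficiency spaces $N_\pm=\ker(D^*\mp i)$ gives equal deficiency indices, hence a selfadjoint extension. Your argument also handles both signs of $\varepsilon_J$ uniformly, which is tidier than the paper: there, the case $\varepsilon_J=+1$, $\varepsilon_D=+1$ is quoted directly from von Neumann's theorem (Reed--Simon, Thm.\ X.3), and the case $\varepsilon_J=-1$ is reduced to it by tensoring with $\C^2$ and an antilinear $C$ with $C^2=-1$, so that $J_2=J\otimes C$ is a genuine conjugation commuting with $D_2=D\otimes 1$; that reduction is equivalent in substance to your count, since equal deficiency indices for $D\oplus D$ means $2n_+=2n_-$, i.e.\ $n_+=n_-$.

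The case $\varepsilon_D=-1$, which you flag as an obstacle and leave open, is indeed a gap in your proposal -- but your suspicion is well founded: the statement is \emph{false} in that case, and the paper's own proof of it is fallacious. The paper sets $D_2=D\otimes\mathrm{diag}(1,-1)=D\oplus(-D)$ and $J_2=J\otimes C'$ with $C'(x_1,x_2)=(\bar x_2,\bar x_1)$; then $J_2$ commutes with $D_2$, so $D_2$ admits a selfadjoint extension, from which the paper claims to ``extract'' one for $D$ as a diagonal block. Nothing can be extracted this way: the deficiency indices of $D\oplus(-D)$ are $(n_++n_-,\,n_-+n_+)$, equal no matter what $D$ is, and a selfadjoint extension of $D\oplus(-D)$ need not be (indeed, if $n_+\neq n_-$, cannot be) block diagonal. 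Concretely, take $\Hh=L^2(0,\infty)$, $D=-i\,d/dx$ with $\dom(D)=C_c^\infty(0,\infty)$, and $J$ pointwise complex conjugation. Then $D$ is densely defined and symmetric, $J$ is antilinear, satisfies \eqref{Eqn:NormPres}, $J^2=1$, preserves $\dom(D)$, and $JD=-DJ$; yet $\ker(D^*-i)=\C e^{-x}$ while $\ker(D^*+i)=0$, so the deficiency indices are $(1,0)$ and $D$ has no selfadjoint extension at all. So the part you could not prove cannot be proved from these hypotheses alone; as you correctly identify, $J$ stabilises each $N_\pm$ rather than exchanging them, and some additional input is required. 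In the paper's applications this is mostly harmless, since for compact $G$ essential selfadjointness is established independently (Prop.\ \ref{Prop:CovRep}); but point (iii) of Prop.\ \ref{Prop:BasicDirac2}, as deduced from this Lemma, is unjustified for non-compact $G$ when $\varepsilon_D=-1$ (i.e.\ $n\equiv 1,5 \bmod 8$).
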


\begin{Rem}
An antilinear operator $J$ which satisfies the equation \eqref{Eqn:NormPres} for all $\xi, \eta \in \Hh$ is called \emph{norm-preserving}.
\end{Rem}

\begin{proof}
We first consider the cases $\varepsilon_D = 1$. If $\varepsilon_J =1$, we are back to the hypotheses of Theorem X.3 p.143 of \cite{ReedSimonII}, thus the conclusion holds.

If $\varepsilon_J = -1$, we reduce the situation to the previous case by an easy computation using tensor products: define an antilinear map $C$ on $\C^2$ by
$
C \begin{pmatrix}
x_1 \\ x_2
\end{pmatrix}
=
\begin{pmatrix}
-\overline{x_2} \\
\overline{x_1}
\end{pmatrix}
$
and then set
\begin{align*}
D_2 &= D \otimes 1
&
J_2 &= J \otimes C,
\end{align*}
where the domain of $D_2$ is $\dom(D) \oplus \dom(D)$. Clearly, the tensor product (over $\C$) of two antilinear maps is well-defined, hence $J_2$ is antilinear. The following facts are easily checked using tensor products: $D_2$ is formally selfadjoint, $J_2^2 =1$, $J_2$ preserves the norm of $\Hh \otimes \C^2$ and the domain of $D_2$. Finally, $J_2$ and $D_2$ commute. Hence, we are back to the previous hypotheses and $D_2$ admits a selfadjoint extension, denoted $\tilde{D_2}$. It is written as a diagonal block matrix, thus the upper left entry denoted $\tilde{D}$ is already a selfadjoint operator, which extends $D$. 

\medbreak

It remains to treat the cases of $\varepsilon_D = -1$. We use the same kind of argument: introduce on $\C^2$ the antilinear operator $
C' \begin{pmatrix}
x_1 \\ x_2
\end{pmatrix}
=
\begin{pmatrix}
\overline{x_2} \\
\overline{x_1}
\end{pmatrix}
$
and then set
\begin{align*}
D_2 &= D \otimes \begin{pmatrix}
1 & 0 \\ 0 & -1
\end{pmatrix}
&
J_2 &= J \otimes C,
\end{align*}
the domain of $D_2$ being $\dom(D) \oplus \dom(D)$. Clearly, we have $J_2 ^2 = J^2 \otimes 1_{\C^2} = \varepsilon_J $, $J_2$ preserves the norm of $\Hh \otimes \C^2$ and sends $\dom(D_2)$ to itself.

Moreover, $J_2$ and $D_2$ commute algebraically, as a simple computation using tensor products shows. 

We are then back to the previous cases of $\varepsilon_D=1$. Hence, $D_2$ admits a selfadjoint extension and we can apply the same argument as in the previous case to extract a selfadjoint extension of $D$.
\end{proof}

In the framework of Lemma \ref{Lem:GinvTrace}, more properties for the Dirac operator are available:
\begin{Prop}
\label{Prop:BasicDirac2}
Under Assumption \ref{Assumption1}, if $\Hh_0 = \GNS(A, \tau)$ for a $G$-invariant trace the symmetric unbounded operator $D$ defined by \eqref{Eqn:Dirac} has the following further properties:
\begin{enumerate}[(i)]
\item
the operator $D$ admits a real structure, \textsl{i.e.} there is a norm-preserving antilinear map $J \colon \Hh \to \Hh$ s.t. for all $a, b \in A$
\begin{align*}
[a, J b^* J^{-1}] &=0
&
J (\dom(D)) &\subseteq \dom(D)
&
J D &= \varepsilon_D D J
\end{align*}
together with $J \, \gamma = \varepsilon_\gamma \gamma \, J$ and $J^2 = \varepsilon_J$ where $\varepsilon_J, \varepsilon_D$ and (in the even case) $\varepsilon_\gamma$ are given by the table of Prop. \ref{Prop:MatF};
\item
$D$ and $J$ satisfy the \emph{first order condition}, \textsl{i.e.} for all $a, b \in \Aa$, 
$$ \big[ [D, a], J b^* J^{-1} \big] = 0 ;$$
\item 
 $D$ admits a selfadjoint extension $\widetilde{D}$.
\end{enumerate}
\end{Prop}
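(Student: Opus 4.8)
The plan is to build the antilinear map $J$ as a tensor product $J = J_0 \otimes J_S$, where $J_S$ is the antilinear map on $S$ furnished by Prop.~\ref{Prop:MatF}, and $J_0$ is the Tomita-type conjugation on $\Hh_0 = \GNS(A,\tau)$ determined on the dense image $H = \{[a] : a \in A\}$ by $J_0[a] = [a^*]$. First I would check that $J_0$ is well-defined and extends to an antilinear isometry of $\Hh_0$: since $\tau$ is a trace, $\langle J_0[a], J_0[b]\rangle = \tau(a b^*) = \tau(b^* a) = \langle [b], [a]\rangle$, so $J_0$ is norm-preserving on $H$ and extends by density, with $J_0^2 = 1$. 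I would then isolate the two structural facts that drive everything. On the one hand, because each $\alpha_g$ is a $*$-automorphism one has $\alpha_g(a^*) = \alpha_g(a)^*$, whence $J_0 U_g = U_g J_0$; consequently $J_0$ commutes with each generator $\partial_j$ (the real parameter causes no sign flip) and preserves the smooth domain $\Hh_0^\infty$. On the other hand, a direct computation gives $J_0\, \pi(b^*)\, J_0^{-1} = \rho(b)$, the right multiplication $[a] \mapsto [ab]$, which commutes with every left multiplication $\pi(a')$ by associativity.

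With these in hand, part (i) reduces to bookkeeping with tensor products. Norm preservation of $J$ follows by multiplying $\langle J_0\xi, J_0\xi'\rangle = \langle\xi',\xi\rangle$ with $\langle J_S s, J_S s'\rangle = \langle s', s\rangle$; the relation $J^2 = \varepsilon_J$ comes from $J_0^2 = 1$ and $J_S^2 = \varepsilon_J$; and $J\gamma = \varepsilon_\gamma \gamma J$ follows from $J_S \gamma_S = \varepsilon_\gamma \gamma_S J_S$ since $J_0$ commutes with $\gamma = 1 \otimes \gamma_S$. For the commutation with $D$, applying $J = J_0 \otimes J_S$ to $D = \sum_j \partial_j \otimes F_j$ and using $J_0 \partial_j = \partial_j J_0$ together with $J_S F_j = \varepsilon_D F_j J_S$ yields $JD = \varepsilon_D D J$; the domain inclusion $J(\dom D) \subseteq \dom D$ holds because $J_0$ preserves $\Hh_0^\infty$ and $S$ is finite-dimensional. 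Finally the reality condition $[a, Jb^*J^{-1}] = 0$ follows at once from the structural fact above: $Jb^*J^{-1} = \rho(b)\otimes 1$ acts by right multiplication on the first factor while $a = \pi(a)\otimes 1$ acts by left multiplication, and the two commute.

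Part (ii) then falls out of the very same observation. By Prop.~\ref{Prop:BasicDirac} the commutator is $[D,a] = \sum_j \partial_j^\Aa(a) \otimes F_j$, an operator whose first tensor factor is again a left multiplication $\pi(\partial_j^\Aa(a))$. Since $Jb^*J^{-1} = \rho(b)\otimes 1$, and left and right multiplications commute, I obtain $\big[[D,a], Jb^*J^{-1}\big] = \sum_j [\pi(\partial_j^\Aa(a)), \rho(b)]\otimes F_j = 0$. Part (iii) is then immediate: $J$ is norm-preserving, satisfies $J^2 = \varepsilon_J$, preserves $\dom(D)$, and obeys $JD = \varepsilon_D DJ$, so Lemma~\ref{Lem:UnboundExt} applies verbatim and produces a selfadjoint extension $\widetilde D$.

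The main obstacle I anticipate is not in the algebra of (i)--(iii), which is essentially the commutation of the left and right regular representations on the GNS space, but in the analytic care needed to set $J_0$ up correctly: extending $J_0$ from $H$ to all of $\Hh_0$ as an antilinear isometry and---most importantly---checking that it preserves the smooth domain $\Hh_0^\infty$, so that $J$ genuinely maps $\dom(D)$ into itself. This rests on the identity $J_0 U_g = U_g J_0$ together with the fact that composing the smooth orbit map $g \mapsto U_g\xi$ with the bounded antilinear $J_0$ preserves differentiability; I would make this precise before invoking Lemma~\ref{Lem:UnboundExt}.
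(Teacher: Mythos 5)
Your proposal is correct and follows essentially the same route as the paper: $J = J_0 \otimes J_S$ with $J_0$ the GNS conjugation $[a]\mapsto[a^*]$, the identification of $Jb^*J^{-1}$ as right multiplication (so that (i) and (ii) reduce to commutation of left and right multiplication), and Lemma~\ref{Lem:UnboundExt} for the selfadjoint extension. In fact, your verification that $J_0U_g = U_gJ_0$, hence that $J_0$ commutes with the $\partial_j$ and preserves $\Hh_0^\infty$, makes explicit the domain preservation and the relation $JD = \varepsilon_D DJ$, which the paper's proof leaves implicit.
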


\begin{Rem}
The above Proposition relates to the axioms of \cite{Grav,recons} in the following way:
\begin{itemize}
\item
Point (i) precisely shows that the \emph{reality} condition of \cite{Grav} (see Axiom (7') p.163).
\item
Point (ii) is the first order condition denoted Axiom (2') in p.164 of \cite{Grav}.
\end{itemize}
Of course, we should not forget that at this point, we still do not have a spectral triple in the proper sense, since $(1 + D^2)^{-1/2}$ does not need be compact!
\end{Rem}

\begin{proof}
Using the same notations as in the proof of Lemma \ref{Lem:GinvTrace}, we can define an antilinear operator $J_0$ on $H$ by $J_0\big( [a] \big) = [a^*]$. This operator preserves the norm on $H$:
$$ 
\langle J_0\big( [a] \big), J_0\big( [b] \big) \rangle = \langle [a^*], [b^*] \rangle = \tau( a b^*) = \tau(b^* a) = \langle [b], [a] \rangle 
$$
since $\tau$ is a trace. It thus extends to an antilinear operator on all of $\Hh_0$. We then set $J := J_0 \otimes J_S$ -- which is well-defined since both $J$ and $J_S$ are antilinear. Moreover, both $J_0$ and $J_S$ are norm-preserving which implies that $J$ is also norm-preserving. 

As both $a$ and $J b^* J^{-1}$ are bounded (linear) operators on $\Hh$, it suffices to check the commutation condition on the subset $H \otimes S \subseteq \Hh_0 \otimes S$, \textsl{i.e.} 
\begin{multline*}
 a \, J b^* J^{-1} ([c] \otimes s )= a J ([b^* c^*] \otimes J_S^{-1} s) = [a c b] \otimes s \\
= J ([b^* c^* a^*] \otimes J_S^{-1} s) =J b^* J ([a c] \otimes s) = J b^* J \, a ( [c]  \otimes s).
\end{multline*}

\medbreak

Regarding the first order condition, since both $[ D, a]$ and $J b^* J^{-1}$ are bounded linear operators, it suffices to prove the commutation on $[c] \otimes s$ for $c \in A$ and $s \in S$. We then have:
\begin{multline*}
[ D, a] \, J b^* J^{-1} \big([c] \otimes s \big) =  \left( \sum \partial_j^\Aa(a) \otimes F_j \right)  \big( [ c b] \otimes s \big) = \sum [ \partial_j^\Aa(a) c b] \otimes F_j s\\
 = J b^* J^{-1}  \left( \sum [ \partial_j^\Aa(a) c] \otimes F_j s \right) = J b^* J^{-1} \, [ D, a]\big([c] \otimes s \big).
\end{multline*}
Finally, the existence of a selfadjoint extension $\widetilde{D}$ of $D$ follows immediately from the real structure $J$ and Lemma \ref{Lem:UnboundExt}.
\end{proof}

\section{Covariant representation of compact Lie group}
\label{Sec:Compact}

Going back to general covariant representations, in the case of a \emph{compact} Lie group, we do not need to choose a selfadjoint extension of $D$: the operator is essentially selfadjoint.
\begin{Prop}
\label{Prop:CovRep}
Let $\Hh_0$ be a Hilbert space endowed with a covariant representation of $(A, G)$. If $G$ is \emph{compact}, then the operator $D$ of Prop. \ref{Prop:BasicDirac} is essentially selfadjoint.
\end{Prop}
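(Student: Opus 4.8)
The plan is to use the compactness of $G$ to break $D$ into a block-diagonal operator whose blocks are bounded (hence self\-adjoint), and then to deduce essential self\-adjointness from the existence of a dense set of analytic vectors via Nelson's theorem.

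First I would apply the Peter--Weyl theorem to the strongly continuous unitary representation $U$ of the compact group $G$ on $\Hh_0$. This yields a Hilbert-space direct sum $\Hh_0 = \bigoplus_{\sigma \in \widehat{G}} \Hh_\sigma$ into isotypical components, where each $\Hh_\sigma$ is a closed $U$-invariant subspace on which $U$ acts as $\sigma \otimes \id_{M_\sigma}$, with $\sigma$ a finite-dimensional irreducible representation of $G$ and $M_\sigma$ the (possibly infinite-dimensional) multiplicity space. Since $\sigma$ is a smooth finite-dimensional representation, the map $g \mapsto U_g|_{\Hh_\sigma} = \sigma(g)\otimes \id_{M_\sigma}$ is norm-smooth; consequently every vector of $\Hh_\sigma$ is smooth, $\Hh_\sigma \subseteq \Hh_0^\infty$, and each generator restricts there to the \emph{bounded} operator $\partial_j|_{\Hh_\sigma} = d\sigma(X_j)\otimes \id_{M_\sigma}$. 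It follows that $D = \sum_j \partial_j \otimes F_j$ preserves each block $\Hh_\sigma \otimes S$ and restricts to a bounded operator $D_\sigma$ there; being the restriction of the symmetric operator $D$ of Prop.~\ref{Prop:BasicDirac}, each $D_\sigma$ is bounded and symmetric, hence self\-adjoint.

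Next I would introduce the algebraic direct sum $\mathcal{D} := \bigoplus_{\sigma}^{\mathrm{alg}} (\Hh_\sigma \otimes S)$, consisting of the vectors lying in only finitely many isotypes. By Peter--Weyl this subspace is dense, and by the previous step $\mathcal{D} \subseteq \Hh_0^\infty \otimes S = \dom(D)$. Any $\xi \in \mathcal{D}$ lies in a finite partial sum $\bigoplus_{\sigma \in F}(\Hh_\sigma \otimes S)$ on which $D$ acts as the bounded self\-adjoint operator $\bigoplus_{\sigma\in F} D_\sigma$; hence $\|D^k \xi\| \le C^k \|\xi\|$ with $C = \max_{\sigma \in F}\|D_\sigma\|$, so the series $\sum_k \|D^k\xi\|\, t^k/k!$ converges for all $t>0$ and $\xi$ is an analytic vector for $D$. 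Thus $D$ possesses a dense set of analytic vectors in $\mathcal{D}$, and Nelson's analytic vector theorem shows that the symmetric operator $D|_{\mathcal{D}}$ is essentially self\-adjoint.

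Finally I would upgrade this to the stated domain. Since $D|_{\mathcal{D}} \subseteq D|_{\Hh_0^\infty \otimes S}$ are both symmetric, their closures satisfy $\overline{D|_{\mathcal{D}}} \subseteq \overline{D|_{\Hh_0^\infty \otimes S}}$; the left-hand side is self\-adjoint, therefore maximal among symmetric operators, while the right-hand side is symmetric, so the two closures coincide and $D$ with domain $\Hh_0^\infty \otimes S$ is essentially self\-adjoint. I expect the main obstacle to be the careful bookkeeping of the Peter--Weyl decomposition when the multiplicities $M_\sigma$ are infinite-dimensional --- in particular checking that every isotypical vector is genuinely smooth and that $\partial_j$ restricts to a bounded operator uniformly over $M_\sigma$; once these points are secured, the reduction to bounded blocks and the analytic-vector argument are routine.
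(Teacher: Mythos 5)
Your proof is correct, and it shares its backbone with the paper's: compactness of $G$ yields a Peter--Weyl decomposition of $\Hh_0$ into $U$-invariant subspaces consisting entirely of smooth vectors, on which $D$ restricts to a tame invariant block. Where you diverge is in the choice of blocks and the closing lemma. The paper splits each isotypical component into finite-dimensional multiplicity subspaces $E_{\ell,k}$, notes that the corresponding projections $Q_{\ell,k}=P_{\ell,k}\otimes 1_S$ commute with $D$, and finishes with the elementary deficiency-range criterion: on each finite-dimensional block the symmetric operator diagonalizes with real eigenvalues, so $\Ran(D\pm i)$ contains every block and is therefore dense. You instead keep each isotypical component $\Hh_\sigma\otimes S$ whole (allowing infinite multiplicity), observe that $D$ restricts there to the \emph{bounded} selfadjoint operator $\sum_j \bigl(d\sigma(X_j)\otimes\id_{M_\sigma}\bigr)\otimes F_j$, and then invoke Nelson's analytic vector theorem on the algebraic sum $\mathcal{D}$ of the blocks, followed by the maximality of selfadjoint operators to pass from the core $\mathcal{D}$ to the full domain $\Hh_0^\infty\otimes S$. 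Your route buys a cleaner treatment of multiplicities (no choice of subspaces $E_{\ell,k}$ is needed, and the uniform bound $\|d\sigma(X_j)\otimes\id_{M_\sigma}\|=\|d\sigma(X_j)\|$ handles infinite multiplicity for free), at the cost of invoking Nelson, which is a heavier tool than the paper needs; the paper's finish is pure linear algebra plus the basic range criterion, and it also dispenses with your final core-to-domain step, since it verifies dense range for $D$ on its full domain directly. Both arguments hinge on the same nontrivial observation, which you rightly flag and justify: vectors in a finite-dimensional (or isotypical) $G$-invariant subspace are automatically smooth, because finite-dimensional representations of Lie groups are norm-smooth.
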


\begin{Rem}
In full generality, if in Prop. \ref{Prop:BasicDirac} the operator $D$ is essentially selfadjoint, then Properties $(i)$--$(ii)$ of Prop. \ref{Prop:BasicDirac} and $(i)$--$(ii)$ of Prop. \ref{Prop:BasicDirac2} (if applicable) also hold for $\overline{D}$ -- see for instance Prop. 2 in the appendix of \cite{TrSpPiCr-Paterson}.
%
%
\end{Rem}

\begin{proof}
Peter-Weyl's decomposition theorem enables us to write $\Hh_0$ as a Hilbertian sum of $G$-representations:
\begin{equation}
\label{Eqn:DecompHh0}
\Hh_{0} = \bigoplus_{\ell} E_{\ell} \otimes \C^{m_\ell}
\end{equation}
where 
\begin{itemize}
\item
$\ell$ is a multi-index labelling the \emph{highest weight} of a representation of $G$,
\item
$E_\ell$ is a finite dimensional Hilbert space, endowed with the representation $\pi_\ell$ of $G$ of highest weight $\ell$,
\item
$m_{\ell}$ is the multiplicity of $E_\ell$ inside $\Hh_0$.
\end{itemize}
For each $\ell$, we can fix $m_\ell$ subspaces $E_{\ell, k} \subseteq \Hh_0$ for $k = 1, \ldots , m_\ell$ which are pairwise orthogonal, unitarily equivalent to $E_\ell$ and exhaust the $E_\ell$ component of $\Hh_0$. Denoting $P_{\ell, k}$ the associated projections on $\Hh_0$ and $Q_{\ell, k} := P_{\ell, k} \otimes 1_S$, we have $Q_{\ell, k} \dom(D) \subseteq E_{\ell, k} \otimes S \subseteq \dom(D)$ 
and $Q_{\ell, k}$ commutes with $D$. To prove that $D$ is essentially selfadjoint, it suffices to prove that $\Ran(D +i)$ and $\Ran(D -i)$ are dense (see \cite{ReedSimonI}, Corollary p.257). 

Using the decomposition \eqref{Eqn:DecompHh0} and the commutation of $Q_{\ell, k}$ with $D$, it suffices to prove that for each $\ell, k$, $E_{\ell, k} \otimes S = \Ran(Q_{\ell, k} D \pm i)$. Since $Q_{\ell, k} D $ is a symmetric operator on the finite dimensional space $E_{\ell, k} \otimes S$, it induces a basis of eigenvectors whose eigenvalues are real. This implies that both $Q_{\ell, k} D + i$ and $Q_{\ell, k} D - i$ are surjective and completes the proof.
\end{proof}

\section{Ergodic actions}
\label{Sec:Main}

In the particular case of \emph{ergodic actions} of {compact} Lie group, we can even estimate the summability of the closure $\overline{D}$ of $D$. To lighten notation, we sometimes write $D$ instead of $\overline{D}$.
\begin{Def}
The action $\alpha$ of $G$ on $A$ is \emph{ergodic} if the $G$-invariants elements are reduced to $\C$, \textsl{i.e.} if $\forall g \in G, \alpha_g(a) = a$, then $a \in \C 1_A$.
\end{Def}

\begin{Rem}
\label{Rem:Trace}
If $G$ is compact and the action is ergodic, then H\o{}egh-Krohn--Landstad--St\o{}rmer theorem (see Theorem 4.1 p.82 of \cite{ErgodCpctGpHKLS}) proves that the unique $G$-invariant state is actually a trace $\tau$. Hence the existence of a $G$-invariant trace $\tau$ is automatic! 
\end{Rem}

We now need a brief reminder regarding \emph{Dixmier trace ideals}, for which we follow Chapter IV of \cite{NCG}. 
More information on symmetrically normed operator ideals is available in \cite{Simontrace}. 

\begin{Def}
\label{Def:DixmierSum}
The ideal $\Ll^{p^+}$ (also denoted $\Ll^{(p,\infty )}$ in \cite{NCG}) is the set of all compact operators $T$ on $\Hh$ s.t. 
$$ 
\sup_k \frac{ \sigma_k(T)}{k^{(n-1)/n}} <  \infty 
$$
where $\sigma_k$ is defined as the supremum of the trace norms of $T E$, when $E$ is an orthonormal projection of dimension $k$, \textsl{i.e.}
$$
 \sigma_k(T) := \sup \{ \| T E \|_1 , \dim E = k \}.
$$
Equivalently, $\sigma_k(T)$ is the sum of the $k$ largest eigenvalues (counted with their multiplicities) of the positive compact operator $|T| := T^* T$. The elements of $\Ll^{p^+}$ are called \emph{$p^+$-summable} (or $(p, \infty )$-summable -- see \cite{NCG}, Sect. IV.2 $\alpha$ p.299 and following).

A spectral triple is \emph{$p^+$-summable} if $(1 + D^2)^{-1/2} \in \Ll^{p^+}$ (compare \cite{EltNCG}, Def. 10.8 p.450).
\end{Def}
%

Examples of $n^+$-summable spectral triples include spin manifolds of dimension $n$ equipped with their Dirac operators -- see \cite{EltNCG}, Theorem 11.1 p.488 and Theorem 7.18 p.293. This last property is related to Weyl's theorem. More material on this topic is available in \cite{EllipticRoe}, especially Chapter 8 therein. We are now ready to state properly:
\begin{Th}
\label{Thm:Main}
If $G$ is a \emph{compact} Lie group of dimension $n$ acting \emph{ergodically} on the unital $C^*$-algebra $A$,
\begin{enumerate}[1.]
\item
using the trace $\tau$ of Rem. \ref{Rem:Trace}, $\Hh_0 := \GNS(A, \tau)$ is endowed with a covariant representation of $(A,G)$ (see Prop. \ref{Prop:CovRep}) and the expression
\begin{equation}
\label{Eqn:Dirac2}
D = \sum_{j=1}^n \partial _j \otimes F_j ,
\end{equation}
(see Prop. \ref{Prop:BasicDirac}) defines on $\dom(D) = \Hh_0^\infty \otimes S$ an essentially selfadjoint operator $D$ s.t. $(A, \Hh_0 \otimes S, \overline{D})$ is a $n^+$-summable spectral triple on $A$, which is even when $n$ is even, has a real structure and satisfies the first order condition;
\item 
given a covariant representation of $(A,G)$ on $\Hh_0$ s.t. $\Hh_0^\infty $ is a finitely generated projective module on $\Aa$, the spectral triple $(A, \Hh_0 \otimes S, \overline{D})$ obtained from \eqref{Eqn:Dirac2} is $n^+$-summable and even if $n$ is even.
\end{enumerate}
\end{Th}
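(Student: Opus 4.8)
The plan is to separate the qualitative spectral-triple structure, which the earlier propositions already supply, from the one quantitative point that genuinely remains: the $n^+$-summability of $(1+\overline D^2)^{-1/2}$. For Part 1, essential self-adjointness of $D$ is exactly Prop. \ref{Prop:CovRep}, the grading $\gamma = 1\otimes\gamma_S$ for even $n$ is Prop. \ref{Prop:BasicDirac}(ii), and -- since here $\Hh_0 = \GNS(A,\tau)$ with $\tau$ the invariant trace furnished by Rem. \ref{Rem:Trace} -- the real structure and the first order condition are precisely Prop. \ref{Prop:BasicDirac2}. For Part 2 only essential self-adjointness (Prop. \ref{Prop:CovRep}) and the grading (Prop. \ref{Prop:BasicDirac}) are claimed, and both are already available. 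In either case, once $(1+\overline D^2)^{-1/2}\in\Ll^{n^+}$ is established, compactness of the resolvent, i.e.\ condition (i) of Def. \ref{Def:TrSp}, follows at once, so everything reduces to an eigenvalue estimate for $\overline D^2$.

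The heart of the argument is a single counting lemma, which I would state and prove once and then feed both parts: \emph{if the multiplicities in the Peter-Weyl decomposition $\Hh_0 = \bigoplus_\ell E_\ell\otimes\C^{m_\ell}$ satisfy $m_\ell \le C\dim E_\ell$ uniformly, then $(1+\overline D^2)^{-1/2}\in\Ll^{n^+}$.} I would first choose the basis $(X_j)$ orthonormal for an $\mathrm{Ad}$-invariant inner product on $\AlgLie$ (available since $G$ is compact), so that $\Omega := -\sum_j\partial_j^2$ is the Casimir, central, acting on $E_\ell$ by a scalar $\kappa_\ell \ge 0$. Since the isotypic projections $P_\ell = \int_G \overline{\chi_\ell(g)}\,U_g\,dg$ commute with every $\partial_j$, the operator $D$ preserves each finite-dimensional block $E_\ell\otimes\C^{m_\ell}\otimes S$; expanding $D^2$ with the Clifford relations \eqref{Eqn:RelF} gives $D^2 = \Omega\otimes 1 + R$ with $R = \tfrac14\sum_{j,k}[\partial_j,\partial_k]\otimes[F_j,F_k]$ a first-order term, and the bound $\|\partial_j|_{E_\ell}\|\le\sqrt{\kappa_\ell}$ (from $-\sum_j(\partial_j|_{E_\ell})^2 = \kappa_\ell$ and skew-adjointness of the $\partial_j$) yields $\|R\|\le C'\sqrt{\kappa_\ell}$ on each block. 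Hence the eigenvalues of $D^2$ on the $\ell$-block lie in $[\kappa_\ell - C'\sqrt{\kappa_\ell},\ \kappa_\ell + C'\sqrt{\kappa_\ell}]$, so for large $\lambda$ the counting function obeys $N_{D^2}(\lambda) \le \dim S \sum_{\kappa_\ell\le 2\lambda}\dim E_\ell\cdot m_\ell \le C\dim S\sum_{\kappa_\ell\le 2\lambda}(\dim E_\ell)^2$. The last sum is the eigenvalue-counting function of the Casimir on $L^2(G) = \bigoplus_\ell\mathrm{End}(E_\ell)$, and the heat-trace bound $\sum_\ell(\dim E_\ell)^2 e^{-t\kappa_\ell} = \Tr e^{-t\Omega}\le c\,t^{-n/2}$ (standard on the compact $n$-manifold $G$) gives, after optimizing at $t = n/(2\lambda)$, the estimate $N_{D^2}(\lambda) = O(\lambda^{n/2})$. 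Consequently the increasing eigenvalues $\Lambda_k$ of $1+\overline D^2$ satisfy $\Lambda_k \gtrsim k^{2/n}$, so $\sigma_k\big((1+\overline D^2)^{-1/2}\big) \lesssim \sum_{i\le k} i^{-1/n} \asymp k^{(n-1)/n}$, which is exactly membership in $\Ll^{n^+}$ per Def. \ref{Def:DixmierSum}.

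It then remains only to verify the multiplicity hypothesis in each case. For Part 1 it holds with $C=1$: since $\Hh_0 = \GNS(A,\tau)$ is the completion of $A$ and the action is ergodic, the H\o{}egh-Krohn--Landstad--St\o{}rmer theorem (\cite{ErgodCpctGpHKLS}, cf.\ Rem. \ref{Rem:Trace}) bounds the multiplicity of each $\pi_\ell$ in $A$, hence in $\Hh_0$, by $\dim E_\ell$, and the counting lemma closes Part 1. For Part 2 I would aim to deduce $m_\ell \le N\dim E_\ell$, where $N$ is the number of $\Aa$-generators of $\Hh_0^\infty$, by comparison with the free case: writing $\Hh_0^\infty\cong e\Aa^N$ for an idempotent $e\in M_N(\Aa)$ and equipping the free module with the $G$-invariant Hermitian metric induced by $\tau$, one would identify $\Hh_0$ $G$-equivariantly with the summand $e\,\GNS(A,\tau)^N$ of $N$ copies of the Part 1 Hilbert space; as a $G$-representation this has multiplicities at most $N$ times those of $\GNS(A,\tau)$, i.e.\ $\le N\dim E_\ell$, after which the counting lemma applies verbatim.

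The delicate step -- and the one I expect to be the main obstacle -- is precisely this identification. The inner product on $\Hh_0$ that makes $U$ unitary is given in advance and need not be the one the module inherits from $\tau$ (it need not even arise from an $\Aa$-valued metric via $\tau$ at all), and $e$ need not be $G$-invariant; so the comparison of $\Hh_0$ with $e\,\GNS(A,\tau)^N$ is genuinely nontrivial. Equivalently, the real content of Part 2 is the \emph{uniform} multiplicity bound $m_\ell = O(\dim E_\ell)$, and everything hinges on showing that the given $\Hh_0$-inner product and the $\tau$-induced one restrict to equivalent norms on the finitely generated module $\Hh_0^\infty$. I would attempt this using the standard fact that any two compatible Hermitian metrics on a finitely generated projective $\Aa$-module are equivalent, combined with an averaging over $G$ (both metrics being $G$-invariant, the relating positive operator commutes with $U$) and faithfulness of $\tau$; granting such an equivalence, $h^{1/2}$ yields a $G$-equivariant bounded identification with bounded inverse, and since $\Ll^{n^+}$-membership of $(1+\overline D^2)^{-1/2}$ depends only on the order of decay of the singular values it is insensitive to passing to an equivalent $G$-invariant inner product, so the transported multiplicity bound suffices without any further control of $D$ beyond its intrinsic block structure.
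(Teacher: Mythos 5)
Your treatment of Part 1 is correct, but it takes a genuinely different route from the paper's. Both proofs rest on the same two inputs: the H\o{}egh-Krohn--Landstad--St\o{}rmer multiplicity bound $m_\ell\le d_\ell:=\dim E_\ell$ for the isotypic components of $A$ (hence of $\GNS(A,\tau)$, via faithfulness of $\tau$), and classical spectral asymptotics on the compact $n$-manifold $G$. The paper uses the second input as a black box -- the Dirac operator $D_{\mathrm{ref}}$ on $\Hh_{\mathrm{ref}}=L^2(G)\otimes S$ has compact resolvent and is $n^+$-summable -- builds from the multiplicity bound a $G$-equivariant isometry $\iota\colon\Hh\to\Hh_{\mathrm{ref}}$, observes that $D$ and $D_{\mathrm{ref}}$ coincide on the blocks lying in the image of $\iota$, and concludes by interlacing of eigenvalues ($\lambda_k\le\mu_k$, hence $\sigma_k\le\sigma_{k,\mathrm{ref}}$). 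You never leave $\Hh$: you expand $D^2=\Omega\otimes 1+R$ via the Clifford relations, bound $\|R\|\le C'\sqrt{\kappa_\ell}$ blockwise, and extract the Weyl bound $N_{D^2}(\lambda)=O(\lambda^{n/2})$ from the heat trace of the Casimir, which is precisely the spectral counting function of $L^2(G)$. Your computations (the formula for $R$, the bound $\|\partial_j|_{E_\ell}\|\le\sqrt{\kappa_\ell}$, the optimization in $t$) are correct, and your choice of an $\mathrm{Ad}$-invariant orthonormal basis matches the paper's implicit convention (cf.\ its remark on basis independence). What your route buys is a self-contained estimate that avoids the paper's unproved (though correct) assertion that $D$ and $D_{\mathrm{ref}}$ ``coincide on the relevant blocks'', and a cleanly isolated counting lemma whose sole hypothesis is $m_\ell\le Cd_\ell$; what the paper's buys is the absence of any Clifford or perturbation computation, since summability is transferred wholesale from the classical Dirac operator on $G$.

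For Part 2, however, there is a genuine gap -- one you candidly flag yourself. Your reduction is the right one: all that is missing is the uniform bound $m_\ell\le Cd_\ell$ for the given covariant representation. But your argument for that bound is left conditional (``granting such an equivalence''), and the tool you invoke cannot supply it: the standard equivalence theorem compares two $\Aa$-valued Hermitian metrics on a finitely generated projective module, whereas here -- as you yourself observe -- the ambient inner product of $\Hh_0$ is not assumed to arise from any $\Aa$-valued metric via $\tau$, so there is no second metric for that theorem to act on; likewise the $G$-averaging step presupposes the very compatibility it is supposed to produce. And a merely $\Aa$-linear, non-equivariant identification $\Hh_0^\infty\cong e\Aa^N$ gives no control whatsoever over $G$-multiplicities, since $e$ need not be $G$-invariant.

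You should know that the paper's own proof of Part 2 simply asserts this step: it reads the hypothesis in the spirit of Connes' finiteness axiom, so that the isomorphism $\Hh_0^\infty\simeq p\Aa^k$ is compatible with the inner products and therefore ``yields a covariant inclusion $\Hh_0\subseteq\GNS(A\otimes\C^k,\underline{\tau})$'', after which it reruns the Part 1 comparison against $D_{\mathrm{ref}}\otimes 1_k$. In other words, your proof stops exactly where the paper's starts; the obstacle you identify is real, and the paper dissolves it by interpretation of the hypothesis rather than by argument. To close the gap without strengthening the hypothesis one needs an equivariant ingredient, for instance: (a) equivariant stabilization for compact groups -- every covariant finitely generated projective $\Aa$-module is an equivariant direct summand of $\Aa\otimes V$ for some finite-dimensional unitary $G$-representation $V$ -- which gives a covariant inclusion $\Hh_0\hookrightarrow\GNS(A,\tau)\otimes V$ and hence, by the fusion-rule computation, $m_\ell\le(\dim V)\,d_\ell$; or (b) perturbation of the finitely many $\Aa$-module generators of $\Hh_0^\infty$ to $G$-finite vectors (generating tuples form an open set, and $G$-finite vectors are dense in the Fr\'echet topology), followed by a direct bound on $m_\ell$ using fusion rules together with the HKLS estimate $\dim A_\nu\le d_\nu^2$. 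Either of these yields $m_\ell=O(d_\ell)$, and then your counting lemma finishes Part 2 verbatim.
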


\begin{Rem}
This Theorem finally proves Point (i) of Def. \ref{Def:TrSp}! The other properties of the spectral triple are immediate consequences of Prop. \ref{Prop:BasicDirac} and \ref{Prop:BasicDirac2}.

\smallbreak

In point 2., the notation $\Hh_0^\infty $ is the same as in \eqref{Def:Hh0inf}. The condition on $\Hh_0^\infty $ as a $\Aa$-module mimicks the \emph{finiteness} Axiom (5) found in \cite{Grav} p.160.
\end{Rem}

\begin{Rem}
Under the above hypotheses, $D$ actually does not depend on the choice of orthonormal basis $(X_i)$ (see Not. \ref{Not:Base}), but the proof of this fact would require a more careful treatment of functoriality, \textsl{i.e.} using $\C l(\AlgLie ')$ instead of $\C l(n)$.
\end{Rem}

\begin{Rem}
The crucial point of the proof below is to control the multiplicities appearing in Peter-Weyl's decomposition \eqref{Eqn:DecompHh0}. Here, we rely on ergodicity and an estimate provided by \cite{ErgodCpctGpHKLS}. However, other means of controlling these multiplicities should lead to analogs of Theorem \ref{Thm:Main} for more general settings.
\end{Rem}

\begin{proof}

We begin with point 1. We are going to prove that $D$ has compact resolvent and is finitely summable by comparing it to the operator $D_{\text{ref}}$ defined by \eqref{Eqn:Dirac} acting on the Hilbert space $\Hh_{\text{ref}} :=L^2(G) \otimes S$ -- equipped with the left regular representation of $G$ on $L^2(G)$.

\medbreak

Since the tangent space $T G$ is trivial, $D_{\text{ref}}$ is actually a Dirac operator on the compact manifold $G$ -- equipped with its trivial spin structure. In particular, $D_{\text{ref}}$ has compact resolvent and is $n^+$-summable.

We can now decompose $\Hh_{\text{ref}}$ using Peter-Weyl's theorem. The result is a Hilbertian direct sum:
\begin{equation}
\label{Eqn:DecompL2G}
\Hh_{\text{ref}} = \bigoplus_{\ell} E_{\ell} \otimes \C^{d_\ell} \otimes S
\end{equation}
where 
\begin{itemize}
\item
$\ell$ is a multi-index labelling the \emph{highest weight} of a representation of $G$,
\item
$E_\ell$ is a finite dimensional Hilbert space, endowed with the representation $\pi_\ell$ of $G$ of highest weight $\ell$,
\item
$d_{\ell}$ is the dimension of $E_\ell$, which is also the multiplicity of $E_\ell$ inside $L^2(G)$.
\end{itemize}
We want to perform the same decomposition for $\Hh_0$. We first decompose $A$ into its \emph{spectral subspaces} (also called \emph{isotypical components}): given a highest weight $\ell$, we interpret the associated representation $\pi_\ell$ on $E_\ell$ as a $d_\ell \times d_\ell$-matrix and write $\chi_\ell(g) = d_\ell \Tr( \pi_\ell(g^{-1}) )$ for its normalised character. The associated spectral subspace $A_\ell$ is the norm closed subspace defined by:
$$ 
A_\ell := \overline{ \left\{ \int_G \chi_\ell(g) \alpha_g(a) dg \middle| a \in A \right\}} \subseteq A. 
$$
Relation (2.2.40) p.45 of \cite{DerivBratteli} proves that the (algebraic) direct sum $A^{\text{alg}} :=\bigoplus^{\text{alg}} A_\ell$ is norm dense in $A$. From \cite{ErgodCpctGpHKLS} p.76, 
we get that $A_\ell$ decomposes into a direct sum of irreducible components, each equivalent to $E_\ell$. Moreover, Prop. 2.1 in the same article ensures that the multiplicity $m_\ell$ of $E_\ell$ inside $A_\ell$ satisfies $m_\ell \leqslant d_\ell$ -- thereby proving that the dimension of $A_\ell$ is bounded by $d_\ell^2$.

\medbreak

Relying on the dense subset $A^{\text{alg}}$ of $A$, it is easy to prove that the unique $G$-invariant $\tau$ is faithful and therefore $\Hh_0$ is obtained as Hilbertian sum:
$$\Hh_0 = \bigoplus E_\ell \otimes \C^{m_\ell} .$$
Comparing the above expression with \eqref{Eqn:DecompL2G}, it is clear that there is an inclusion $\iota_0 \colon  \Hh_0 \to L^2(G)$ which commutes with the action of $G$. This inclusion extends to an inclusion $\iota \colon \Hh \to \Hh_{\text{ref}}$.

\medbreak

Since $D_{\text{ref}}$ has compact resolvent, $\Hh_{\text{ref}}$ admits a Hilbertian basis of eigenvectors. If for each $\ell$, we choose a decomposition $\bigoplus_{k=1}^{d_\ell} E_{\ell, k}$ of the term $E_\ell \otimes C^{d_\ell}$ in \eqref{Eqn:DecompL2G}, it is easily checked that the associated projections $P_{\ell, k}$ intertwin the action of $G$ on $\Hh_{\text{ref}}$ and thus commutes with $D_{\text{ref}}$. Moreover, we can pick the spaces $E_{\ell, k}$ so that if $k \leqslant m_\ell$, then $E_{\ell, k} \in \iota(\Hh)$ and if $k > m_\ell$, then $E_{\ell, k}$ is orthogonal to $\iota(\Hh)$.

\medbreak

Hence, we can choose a basis of $\Hh_{\text{ref}}$ made of eigenvectors for $D_{\text{ref}}$. Since $P_{\ell, k}$ commutes with $D_{\text{ref}}$, we can choose a basis which is compatible with the decomposition $1_{\Hh_{\text{ref}}} = \bigoplus P_{\ell, k}$. It is then clear that $D_{\text{ref}}$ and $D$ coincide on the relevant blocks $E_{\ell, k}$, and we therefore get a basis of $\Hh$ made of eigenvectors for $D$. The associated eigenvalues are the same for $D_{\text{ref}}$ and $D$. In particular, since $D_{\text{ref}}$ has compact resolvent, so does $D$.

\bigbreak

To prove that $D$ is $n^+$-summable, notice first that a consequence of the previous discussion is that the eigenvalues $(\mu_k)_{k \in \N}$ of $|D|$ are the same as those $( \lambda_k )_{k \in \N}$ of $|D_{\text{ref}}|$, but they have lower (possibly vanishing) multiplicities. This implies that if the eigenvalues $(\mu_k)$ and $(\lambda_k)$ (repeated with their multiplicities) are in increasing order, then for all $k \in \N$, $\lambda_k \leqslant \mu_k$. Indeed, given an increasing sequence, suppressing some terms leads to a sequence which increases ``faster''.

\smallbreak

Formally, we want to prove that $(1 + D^2)^{-1/2}$ is in the ideal $\Ll^{n^+}$ 
and we already know that $(1 + D_{\text{ref}}^2)^{-1/2} \in \Ll^{n^+}$. 

Since the function $f(x) =(1+ x^2)^{-1/2}$ is decreasing on $\R^+$, we see that if $(\lambda_k)$ is the sequence of eigenvalues of $|D_{\text{ref}}|$ in \emph{increasing} order, then $(\lambda'_n) :=\big( f(\lambda_k)\big)$ is the sequence of eigenvalues of $(1 + D_{\text{ref}}^2)^{-1/2}$ in \emph{decreasing} order. Moreover, as $\lambda_k \leqslant \mu_k$, we have $\lambda_k' = f(\lambda_k) \geqslant \mu'_k := f(\mu_k)$. In the notations of Def \ref{Def:DixmierSum}, we have:
\begin{align*}
\sigma_{k, \text{ref}} := \sigma_k \left( (1 +D_\text{ref}^2)^{-1/2} \right) &= \sum_{p = 0}^{k-1} \lambda_p'
&
\sigma_k := \sigma_k \left( (1 +D^2)^{-1/2} \right) &= \sum_{p = 0}^{k-1} \mu_p'
\end{align*}
and the monotony of $f$ implies $ \sigma_{k, \text{ref}} \geqslant \sigma_k$. That $(1 + D_{\text{ref}}^2)^{-1/2} \in \Ll^{n^+}$ means  
$$ 
\left\| \left( 1+D_{\text{ref}}^2 \right)^{-1/2} \right\|_{n^+} = \sup_k \frac{ \sigma_{k, \text{ref}}}{k^{(n-1)/n}} <  \infty.
$$
This in turn implies
$$ \left\| \left( 1+D^2 \right)^{-1/2} \right\|_{n^+} = \sup_k \frac{ \sigma_k}{k^{(n-1)/n}} \leqslant  \sup_k \frac{ \sigma_{k, \text{ref}}}{k^{(n-1)/n}} < \infty $$
and completes the proof that $D$ is $n^+$-summable.

\bigbreak

We now turn our attention to point 2. The proof is essentially the same, but we need to compare $D$ with $D_\text{ref,$k$} := D_\text{ref} \otimes 1_k$ acting on $L^2(G) \otimes S \otimes \C^k$ instead of $D_\text{ref}$. The finiteness condition on $\Hh_0^\infty $ can be written $\Hh_0^\infty \simeq p \Aa^k$, thus yielding a covariant inclusion $\Hh_0 \subseteq \GNS( A \otimes \C^k, \underline{\tau})$ where $\underline{\tau}$ is defined on $A \otimes \C^k$ by:
$$ \underline{\tau}(a_1, \ldots , a_k) = \frac{1}{k}\sum_{j=1}^k \tau(a_j) .$$
This is indeed a state since it is the restriction to diagonal matrices of the state $\tau \otimes \Tr$ defined on $A \otimes M_k(\C)$.

$D_\text{ref,$k$}$ is $n^+$-summable since functional calculus leads to
$$ ( 1+ D_\text{ref,$k$}^2)^{-1/2} = (1 + D_\text{ref}^2)^{-1/2} \otimes 1_k ,$$
and this is a finite sum of operators $\sum_j (1+D_\text{ref}^2)^{-1/2} \otimes e_{jj}$ which are all in the ideal $\Ll^{n^+}$. Another way to prove this property would be to use the ``scale invariance'' considered in \cite{NCG} IV.2.$\beta$ p.305.

\smallbreak

In any case, the argument concludes as for point 1.: using a Peter-Weyl decomposition, we see that $D_\text{ref,$k$}$ and $D$ coincide on the relevant irreducible components and a comparison of multiplicities leads to the $n^+$-summability of $D$.
\end{proof}

\section{Remarks, examples and counterexamples}
\label{Sec:Example}

We begin this section with a few comments about Theorem \ref{Thm:Main}: 
\begin{itemize}
\item
Our main result only gives an upper bound on summability. In general we cannot do better: if $G$ acts ergodically on $A$, then letting $K$ act trivially on $A$, we obtain an ergodic $G \times K$-action.
\item
For $n^+$-summable operator $D$, we know that a Hochschild cocycle $\varphi$ on $\Aa$ can be defined by:
\begin{equation}
\label{Eqn:Cocycle}
\varphi(a^0, a^1, a^2) = \lambda_n \Tr_\omega(\gamma a^0 [D, a^1] \cdots [D, a^n]),
\end{equation}
as stated in \cite{NCG}, IV.2 Theorem 8 p.308 -- which was later improved in \cite{localtrace95}. In the case of noncommutative tori (see Example \ref{Ex:ToreNC} below), the equation \eqref{Eqn:Cocycle} leads (up to renormalisation) to the cyclic cocycle:
$$
\varphi(a^0, a^1, a^2) = \tau\Big( a^0 (\partial _1(a^1) \partial_2(a^2) - \partial _2(a^1) \partial _1(a^2)) \Big).
$$
Would it be possible in the setting of Theorem \ref{Thm:Main} (1) to relate \eqref{Eqn:Cocycle} to cyclic cocycles as constructed in \cite{NCG}, III.6 example 12 c) p.256?
\item
Another interesting improvement would be to find sufficient conditions for the Poincaré duality (Axiom (6') of \cite{Grav}) to hold. However, this property would really depend on the algebraic structure on $A$, and not just on the multiplicity of its spectral subspaces.
\end{itemize}

To illustrate our results and hypotheses we give examples showing that frequently spectral triples arise from Lie group actions, and we hint at some possible interesting generalizations:

\begin{Ex}
\label{Ex:ToreNC}
The spectral triples on noncommutative tori were among the first examples considered by Connes. Indeed, they already appear in his article \cite{Grav} (see p.166) in which he defines the notion. His original example was in dimension $2$ but the construction was later extended	 to include noncommutative tori of any dimension (see \cite{EltNCG}, section 12.3 and especially p.545). 
We illustrate the notion in the two-dimensional case which was studied as early as in \cite{RieffTor}: the $C^*$-noncommutative torus $A_\theta$ is the universal $C^*$-algebra generated by two unitaries $U$ and $V$ subject to the relation $UV=e^{2\pi i\theta}VU$ for $\theta \in \R$.

It is well known that the actions of $T^n$ on the noncommutative tori (even of dimension $n$) are ergodic (see \cite{EltNCG} p.537). Hence our construction fully applies to these algebras. The unbounded operator $D$ obtained from our Theorem \ref{Thm:Main} is exactly the same as the operator defined in \cite{EltNCG} (12.24) p.545. The $n^+$-summability of $D$ corresponds to Prop. 12.14 p.545  
of \cite{EltNCG} and is a sharp estimate of summability in this case. 
\end{Ex}

\begin{Ex} 
\label{Ex:Tore1NC}
It is easy to see that some hypothesis on the fixed-point algebra is unavoidable. For example, already the action of $S^1$ on $\mT^2$ by $\lambda.(z_1,z_2):=(\lambda z_1,z_2)$ yields an unbounded operator on the Hilbert space $L^2(\mT^2)$ which certainly does not have compact resolvent because it has an infinite-dimensional kernel. 

However, note that this operator has ``compact (even summable) resolvent with coefficients in $C(\mT^2)$'', \textsl{i.e.} $(1+D^2)^{-1}\otimes 1\in \mc{L}^p\otimes_\pi C(\mT)$. In this sense, it yields an (unbounded) $\mathcal{LC}$-Kasparov module from $C(\mT^2)$ to $\mathcal{L}^p\otimes_{\pi}C(\mT)$ in the sense of \cite{MR2964680}.
\end{Ex}

\begin{Ex}
Quantum Heisenberg Manifolds (QHM) provide another illustration of our results. This family of algebras was introduced in \cite{RieffelDefQuant} as an example of Rieffel's strict quantization-deformation and largely studied since, \textsl{e.g.} \cite{AbadieEE, AbadieFixedPts, PairingsQHM, ChernGrensingGabriel, GeomQHM} to name just a few elements of the available litterature. In particular, QHM admit an ergodic action of the (non-compact) Heisenberg group $G$ and a unique $G$-invariant trace $\tau$ -- yielding properties very similar to those assumed in Theorem \ref{Thm:Main}. In the article \cite{GeomQHM}, Chakraborty and Sinha constructed a family of spectral triples on QHM whose Hilbert space is obtained from $\tau$ by the GNS construction. The expression of the operator is given by \eqref{Eqn:Dirac} (compare Prop. 9 of \cite{GeomQHM}) and they went further, proving summability in Theorem 10 p.431. 

When applied to Quantum Heisenberg manifolds, our results recover a real structure and thus complement \cite{GeomQHM}. However, the non-compactness of the Heisenberg group prevents us from reproducing summability. We will encounter a similar phenomenon in Example \ref{diracdual} below.  We thus plan to improve our construction in another article \cite{TrSpPCGGG}, elaborating on the present one.
\end{Ex}

\begin{Ex}
Another, related, example is given by Kasparov's Dirac element (see for example \cite{MR554420} or \cite{MR1388299} for the Dirac-dual Dirac method at work). It is easy to generalize our results in the following two ways: first we can treat more general modules than the uniquely determined spinor module we used in our construction of the Dirac-type operator. In fact, it suffices that the module $S$ used in the representation be a complex module over $\C l(n)$ (which carries a real structure if one wants to recover Prop. \ref{Prop:BasicDirac}). In the ``Real'' case, this means that the module is equipped with a ``Real'' structure which is compatible with the canonical ``Real'' structure on $\mathbb{C}l(n)$. Secondly, one may include a ``Real'' structure on the $C^*$-algebra $A$ which is preserved by the action of $G$. ``Real structures'' in this sense were already used by Kasparov in his very first definition of $KK$-theory (see, for example, \cite{MR1267059} for an overview). 

In order to carry over the summability results for the associated spectral triple it suffices to decompose the ``spinor module'' into irreducible representations.

With this slight generalization, it is possible to include Kasparov's Dirac element into our framework. It is given by the Hilbert space $\Omega_\C(\R^n)$ of $L^2$-forms on $\R^n$ on which $C(\R^n)$ acts naturally, together with the Hodge-Dirac operator. Our Proposition \ref{Prop:BasicDirac2} is in the spirit of Wolf's theorem  (\cite{LawsonMichelsohn} Theorem 5.7). However, the operator one obtains has continuous spectrum and therefore does not have compact resolvent as in the example of the Heisenberg manifold. This shows that some hypothesis is necessary on the relative size of the Lie group compared with the algebra in order to apply our techniques. Note however that for every function $f\in C(\R^n)$ with compact support the operator $f(i+D)^{-1}$ is in fact compact and the mapping $f\mapsto f(i+D)^{-1}$ is continuous (this also holds for Schwartz functions). In this sense, it should be possible to generalize our results to noncompact Lie groups by passing to nonunital spectral triples.
\end{Ex}

\begin{Ex}
\label{diracdual}
A less simple-minded example is provided by the harmonic oscillator $d+d^*+c(x)$ acting as in the last example on $\Omega_\C(\R^n)$, where $c(x)$ denotes Clifford multiplication by $x$ and $d +d^*$ the Hodge-Dirac operator. This operator has recently come to our attention in \cite{MR2732067} and was also used heavily in \cite{MR2964680}. And it again has a group-theoretic interpretation: it can be obtained as the operator associated to an action of the Heisenberg group of $\R^n$ on $\R^n$. The fact that it has summable resolvent (which is classical and seen by using Hermite polynomials) shows that when the Lie group is non-compact, it may induce a selfadjoint operator in the above way which has nevertheless summable resolvent.

This example is closely related to the spectral triple on $C(\mT)$ obtained by our techniques: the class in the spectral triple on $C(\R)$ defined by our techniques is in a sense a unitalization of the one defined in this example.
\end{Ex}

\begin{Ex} Further examples can be obtained from \emph{(Cuntz-)Pimsner algebras} as defined in \cite{PiCrG}. Indeed, consider a $C^*$-correspondence $E$ over a $C^*$-algebra $A$, \textsl{i.e.} a (right) Hilbert module $E$ with a (faithful) left $A$-action $\phi \colon A \to \Ll(E)$. Such objects are called ``Hilbert bimodules'' in \cite{PiCrG}. These data $E$ and $A$ define a Pimsner algebra $\Oo_E$.

Assume moreover that a Lie group $G$ acts on $A$ and that there is a compatible $G$-action on $E$, in the sense of \cite{PiCrG}, Remark 4.10-(2). The latter then provides us with a $G$-action on the Pimsner algebra, which commutes with the canonical gauge action, thereby defining an action of $G\times S^1$ on $\mc{O}_E$ -- to which our theory may apply. However, even if $G$ acts ergodically on $A$, the action of $G \times S^1$ on $\Oo_E$ need not be ergodic, as illustrated by the Cuntz algebra $\Oo_2$ generated by $A = \C$ (on which the trivial group $G= \{1\}$ acts ergodically) and $E = \C^2$: the $S^1$-action induced on $\Oo_2$ is clearly not ergodic!

\medbreak

Yet, if we consider \emph{Hilbert bimodules} ($E$ with a left- \emph{and} a right-Hilbert module structure) instead of $C^*$-correspondences, the resulting Pimsner algebras are in fact \emph{generalised crossed products} (see \cite{AbadieEE}) and better results are available. For instance, if $G$ acts ergodically on $A$, then the induced $G \times S^1$-action on $\Oo_E$ is ergodic, and our theory applies to its fullest extent (namely Theorem \ref{Thm:Main}). This sort of results form the core of our forthcoming article \cite{TrSpPCGGG}.
\end{Ex}

\paragraph{Acknowledgement}

Both authors are grateful to D. Bahns and G. {Skandalis}.

\medbreak

Olivier \textsc{Gabriel}, \texttt{ogabriel@uni-math.gwdg.de}, 

Mathematisches Institut -- Universität Göttingen

Bunsenstr. 3-5 D--37 073 Göttingen, Germany

\bigbreak

Martin \textsc{Grensing}, \texttt{grensing@gmx.net}

D\'epartment de Math\'ematiques -- Universit\'e d'Orl\'eans, 

B.P. 6759 -- 45 067 Orl\'eans cedex 2, France.

\bibliographystyle{alpha} 
\bibliography{biblio}

\end{document}